\renewcommand{\Re}{\operatorname{Re}}
\renewcommand{\Im}{\operatorname{Im}}
\newcommand{\cH}{\mathcal H}
\renewcommand{\MR}[1]{}
\newcommand{\mcc}{M\textsuperscript{c}Carthy}
\newcommand{\Mult}{\operatorname{Mult}}
\newcommand{\bD}{\mathbb D}
\newcommand{\bB}{\mathbb B}
\newtheorem{theorem}{Theorem}[section]
\newtheorem{lemma}[theorem]{Lemma}
\newtheorem{proposition}[theorem]{Proposition}
\newtheorem{corollary}[theorem]{Corollary}
\theoremstyle{definition}
\newtheorem{question}[theorem]{Question}
\newtheorem{remark}[theorem]{Remark}
\newtheorem*{remark*}{Remark}
\title[Embedding dimension]{Embedding dimension of the Dirichlet space}
\author[M. Hartz]{Michael Hartz}
\address{Fachrichtung Mathematik, Universit\"at des Saarlandes, 66123 Saarbr\"ucken, Germany}
\email{hartz@math.uni-sb.de}
\thanks{The author was partially supported by a GIF grant.}
\subjclass[2010]{Primary: 46E22; Secondary: 30H50}
\keywords{Dirichlet space, complete Pick space, Drury--Arveson space, pseudohyperbolic metric}
\date{\today}
\begin{document}

\begin{abstract}
  The classical Dirichlet space is a complete Pick space, hence by a theorem of Agler and \mcc, there exists
  an embedding $b$ of the unit disc into a $d$-dimensional ball such that composition
  with $b$ realizes the Dirichlet space as a quotient of the Drury--Arveson space.
  We show that $d =\infty$ is necessary, even if we only demand that composition with $b$
  induces a surjective map between the multiplier algebras.
\end{abstract}

\maketitle

\section{Introduction}

\subsection{Background}
The classical Dirichlet space on the open unit disc $\mathbb{D} \subset \mathbb{C}$ is defined
as
\begin{equation*}
  \mathcal{D} = \Big\{f \in \mathcal{O}(\mathbb{D}) : \int_{\mathbb{D}} |f'|^2 d A < \infty \Big\},
\end{equation*}
where $d A$ denotes the normalized area measure on $\mathbb{D}$.
Equipped with the norm
\begin{equation*}
  \|f\|^2 = \int_{\mathbb{D}} |f'|^2 d A + \|f\|_{H^2}^2,
\end{equation*}
where $H^2$ denotes the classical Hardy space and
\begin{equation*}
  \|f\|^2_{H^2} = \sup_{0 \le r < 1} \int_{0}^{2 \pi} |f(r e^{ it})|^2 \frac{dt}{2 \pi},
\end{equation*}
the Dirichlet space is a reproducing kernel Hilbert space of functions on $\mathbb{D}$ with reproducing kernel
\begin{equation*}
  k(z,w) = \frac{1}{z \overline{w}} \log \Big( \frac{1}{1 - z \overline{w}} \Big).
\end{equation*}
For background and motivation, the reader is referred to the books \cite{EKM+14,ARS+19}.

An important feature of the Dirichlet space is the complete Pick property, meaning that $\mathcal{D}$ satisfies
a version of the classical Pick interpolation theorem; see \cite{AM02} for background.
This key realization, due to Agler, has led to the solution of some open problems in the Dirichlet space,
such as the characterization of interpolating sequences due to Marshall and Sundberg \cite{MS94a}
(a different proof was given independently by Bishop \cite{Bishop94}), quotient representations of functions in $\mathcal{D}$ by Aleman, \mcc, Richter and the author \cite{AHM+17a}, and factorization in the weak product space
by Jury and Martin \cite{JM18}.

A theorem of Agler and \mcc\ \cite{AM00} shows that the complete Pick property of $\mathcal{D}$ is equivalent
to the existence of a number $d \in \mathbb{N} \cup \{\infty\}$ and a map
$b: \mathbb{D} \to \mathbb{B}_d$, so that
\begin{equation}
  \label{eqn:AM_kernel}
  k(z,w) = \frac{1}{1 - \langle b(z),b(w) \rangle } \quad (z,w \in \mathbb{D}).
\end{equation}
Here, $\mathbb{B}_d$ is the open unit ball in $\mathbb{C}^d$, which is understood as $\ell^2$ if $d= \infty$.
(In general, one needs to allow rescalings of the form $\frac{\delta(z) \overline{\delta(w)}}{1 - \langle b(z), b(w) \rangle}$
for a nowhere vanishing function $\delta$, but one can achieve that $\delta = 1$ since $k(z,0) = 1$ 
for all $z$.)
In fact, the complete Pick property of $\mathcal{D}$ is often applied by using that \eqref{eqn:AM_kernel} holds,
as in the works \cite{AHM+17a,JM18} mentioned above.
We will explicitly construct such a map $b$ in Section \ref{sec:explicit_embedding}.

The relation \eqref{eqn:AM_kernel} can be interpreted in the following way.
Let $K(z,w) = \frac{1}{1 - \langle z,w \rangle }$ for $z,w \in \mathbb{B}_d$. This is a positive definite
kernel, and the corresponding reproducing kernel Hilbert space is denoted by $H^2_d$ and called the Drury--Arveson space \cite{Arveson98}.
The theorem of Agler and \mcc\ can be interpreted as saying that $H^2_d$ is a universal complete Pick space.
In particular, in the case of the Dirichlet space, \eqref{eqn:AM_kernel} can be rephrased by saying that
$b$ induces a co-isometric composition operator
\begin{equation*}
  H^2_d \to \mathcal{D}, \quad f \mapsto f \circ b;
\end{equation*}
see Proposition \ref{prop:other_emb} for more details.
As a consequence, on the level of multiplier algebras, $b$ induces
a (complete) quotient mapping
\begin{equation*}
  \Phi: \Mult(H^2_d) \to \Mult(\mathcal{D}), \quad \varphi \mapsto \varphi \circ b,
\end{equation*}
meaning that the induced map $\Mult(H^2_d) / \ker(\Phi) \to \Mult(\mathcal{D})$ is a (completely) isometric
isomorphism. In particular, $\Phi$ is surjective.
These implications will also be reviewed in Proposition \ref{prop:other_emb}.

It turns out that $H^2_d$ and its multiplier algebra are significantly more tractable if $d < \infty$.
To give a concrete example, the maximal ideal space $\mathcal{M}(\Mult(H^2_d))$ of $\Mult(H^2_d)$ is fibered over $\overline{\mathbb{B}_d}$ via the map
\begin{equation*}
  \pi: \mathcal{M}(\Mult(H^2_d)) \to \overline{\mathbb{B}_d}, \quad \chi \mapsto (\chi(z_1),\chi(z_2),\ldots).
\end{equation*}
If $d< \infty$, the fibers over the open ball $\mathbb{B}_d$ are singletons.
But if $d = \infty$, the fibers are extremely complicated. Indeed, $\pi^{-1}(0)$ contains
a copy of the Stone-\v{C}ech remainder $\beta \mathbb{N} \setminus \mathbb{N}$ if $d = \infty$.
This has led to some issues in the area; cf.\ \cite{DHS15a}.

\subsection{Main result}
Since $H^2_d$ is more tractable if $d < \infty$,
the question arose for which complete Pick spaces one can achieve $d < \infty$ in the Agler--\mcc\ theorem.
In particular, it was asked whether this is possible for the Dirichlet space, see \cite[Section 7.5]{SS14}.
It was shown independently by Rochberg \cite{Rochberg16} and by the author \cite[Corollary 11.9]{Hartz17a} that
an embedding $b: \mathbb{D} \to \mathbb{B}_d$ for which relation \eqref{eqn:AM_kernel} holds
only exists if $d = \infty$.
However, for a number of questions, a weaker relation than \eqref{eqn:AM_kernel} would be sufficient.
For instance, one might only demand that
$b$ induces a surjective composition operator $H^2_d \to \mathcal{D}$.
Perhaps one of the weakest reasonable notions of embedding is to demand that
$b$ induces a surjective map $\Mult(H^2_d) \to \Mult(\mathcal{D})$,
which is still sufficient
for understanding algebraic properties of $\Mult(\mathcal{D})$, such as the maximal ideal space.
The relationship between this and other notions of embedding will be explained in Proposition \ref{prop:other_emb}.

The main result of this note shows that even with this weak notion of embedding, $d = \infty$ is necessary.
In particular, this answers the question raised by Salomon and Shalit in \cite[Section 7.5]{SS14}.
For a statement adapted to the setting of \cite{SS14}, see also Theorem \ref{thm:no_iso}.

\begin{theorem}
  \label{T:main}
  There does not exist a map $b: \mathbb{D} \to \mathbb{B}_d$ with $d \in \mathbb{N}$
  that induces a surjective homomorphism
  \begin{equation*}
    \Mult(H^2_d) \to \Mult(\mathcal{D}), \quad \varphi \mapsto \varphi \circ b.
  \end{equation*}
\end{theorem}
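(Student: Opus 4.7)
The plan is to exploit the open mapping theorem to turn surjectivity of $\Phi$ into a geometric Lipschitz estimate between pseudohyperbolic metrics, and then to derive a contradiction from the finite-dimensionality of $\mathbb{B}_d$.

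First, composition with $b$ defines a homomorphism $\Phi: \Mult(H^2_d) \to \Mult(\mathcal{D})$, which is automatically bounded by the closed graph theorem (pointwise limits of multipliers are multipliers) and surjective by hypothesis. The open mapping theorem then provides $C > 0$ such that every $\psi \in \Mult(\mathcal{D})$ with $\|\psi\| \le 1$ admits a preimage $\varphi \in \Mult(H^2_d)$ with $\|\varphi\| \le C$. Testing against multipliers vanishing at a prescribed point and passing to suprema yields the Lipschitz inequality
\[
  \rho_{\mathcal{D}}(z,w) \;\le\; C\,\rho_{\mathbb{B}_d}\!\bigl(b(z), b(w)\bigr) \qquad (z,w \in \mathbb{D}),
\]
where $\rho_{\mathcal{D}}$ and $\rho_{\mathbb{B}_d}$ are the multiplier pseudohyperbolic metrics, which, by the complete Pick property, coincide with the intrinsic kernel pseudohyperbolic metrics. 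Combining surjectivity with bounded interpolation then shows that $b$ is injective and that any interpolating sequence $\{z_n\}$ for $\Mult(\mathcal{D})$ is mapped to an interpolating sequence $\{b(z_n)\}$ for $\Mult(H^2_d)$.

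The main work then lies in producing an interpolating sequence $\{z_n\} \subset \mathbb{D}$ for $\Mult(\mathcal{D})$ whose image under \emph{any} such $b$ would violate the interpolation conditions for $\Mult(H^2_d)$ at finite $d$. The relevant tools are the Marshall--Sundberg characterization (separation in $\rho_{\mathcal{D}}$ plus a logarithmic Carleson condition) and the analogous characterization for $\Mult(H^2_d)$ (separation in $\rho_{\mathbb{B}_d}$ plus an $H^2_d$-Carleson condition). The essential asymmetry to exploit is that the Dirichlet kernel grows only logarithmically at the boundary while the kernels of $H^2_d$ grow polynomially of degree $d$; the idea is to choose $\{z_n\}$ with $1 - |z_n|^2$ decaying in a regime where the Lipschitz estimate forces $(1-|b(z_n)|^2)$ to remain too large to satisfy the $H^2_d$-Carleson summability. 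The hard part will be precisely this quantitative calibration: matching the geometries so that the constraints become irreconcilable for \emph{every} finite $d$. This is where the infinite-dimensional flavor of the Dirichlet multiplier algebra must be extracted from the multiplier-algebra data alone, possibly by also invoking the explicit Agler--\mcc\ embedding developed in Section~\ref{sec:explicit_embedding}, or by analyzing the holomorphic boundary behavior of $b$, each coordinate of which lies in $\Mult(\mathcal{D})$.
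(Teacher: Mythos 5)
Your first step is essentially the paper's: automatic continuity plus the open mapping theorem, combined with the Pick-property description of the kernel metric ($\delta_{\cH}(z,w)=\sup\{|\varphi(z)|:\varphi(w)=0,\ \|\varphi\|_{\Mult(\cH)}\le 1\}$), shows that $b$ is a bi-Lipschitz embedding of $(\mathbb{D},\delta_{\mathcal{D}})$ into $(\mathbb{B}_d,\rho)$. (Note you only wrote down the lower bound $\delta_{\mathcal{D}}(z,w)\le C\rho(b(z),b(w))$ coming from surjectivity; you will also need the upper bound $\rho(b(z),b(w))\le\|\Phi\|\,\delta_{\mathcal{D}}(z,w)$, which comes from mere boundedness of $\Phi$.) Up to here you are on the paper's track.

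The second half is where there is a genuine gap. You defer the entire quantitative content to ``the hard part,'' and the route you sketch (interpolating sequences and Carleson conditions) does not obviously close. Two specific problems. First, to show $1-\|b(z_n)\|^2$ stays ``too large,'' the pointwise Lipschitz bound is useless: it only gives $\|b(z)\|\le\|\Phi\|\,\delta_{\mathcal{D}}(0,z)$, and $\delta_{\mathcal{D}}(0,z)\to 1$, which is no better than the trivial bound. The paper's key idea is to pass to the induced \emph{length} metrics: the length of the radius $[0,r]$ in $\delta_{\mathcal{D}}$ is only $\sim(\log\frac{1}{1-r})^{1/2}$, while $\rho^*=\beta=\tanh^{-1}\rho$ on the ball, so a Lipschitz map forces $\log\frac{1}{1-\|b(z)\|}\lesssim(\log\frac{1}{1-|z|})^{1/2}$, i.e.\ $\|b(z)\|\le 1-(1-|z|)^{\alpha}$ for every $\alpha>0$ eventually. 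Without this square-root gain there is no contradiction to reach. Second, the contradiction itself is obtained not from Carleson summability (a separated infinite sequence must leave every compact subset of $\mathbb{B}_d$, so ``the points stay too far from the boundary'' does not by itself violate the Carleson condition) but from a pure counting argument: the circle $C_r$ contains $\gtrsim(1-r)^{-1/2}$ points that are $\varepsilon$-separated in $\delta_{\mathcal{D}}$, their images are $m\varepsilon$-separated in $\rho$ and lie in the ball of radius $s(r)=1-(1-r)^{1/(2d+1)}$, and the Duren--Weir bound caps the number of $\rho$-separated points there by $C(1-r)^{-d/(2d+1)}$, with $d/(2d+1)<1/2$. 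You would need to supply both the length-metric comparison and a counting (or genuinely workable interpolation-theoretic) argument to complete the proof.
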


This result will be proved in Section \ref{sec:proof}.
The arguments involving ranks of kernels, which were used in \cite{Hartz17a} to show that
\eqref{eqn:AM_kernel} cannot hold for $d < \infty$, do not seem to generalize to the setting of Theorem \ref{T:main}.
Instead, we will refine Rochberg's geometric arguments of \cite{Rochberg16}.
He observed that if \eqref{eqn:AM_kernel} holds, then $b$ is an isometric embedding
from the disc, equipped with a suitable metric related to the Dirichlet space, into the ball,
equipped with the pseudohyperbolic metric. He then used geometric arguments to show that this is impossible for $d < \infty$.

The proof of Theorem \ref{T:main} follows a similar outline. First, we will show that if $b: \mathbb{D} \to \mathbb{B}_d$
induces a surjective homomorphism as in Theorem \ref{T:main}, then $b$ has to be a bi-Lipschitz embedding
with respect to the metrics mentioned above.
We will then identify geometric obstructions showing that this is impossible if $d < \infty$.
The basic idea of the geometric argument is similar to that in Rochberg's proof,
but the bi-Lipschitz setting necessitates significant changes compared to the isometric setting.

\subsection{An explicit embedding into \texorpdfstring{$\mathbb{B}_\infty$}{Binfinity}}
In Section \ref{sec:explicit_embedding}, we will explicitly construct a map $b: \mathbb{D} \to \mathbb{B}_\infty$
such that the reproducing kernel $k$ of the Dirichlet space has the representation
\begin{equation*}
  k(z,w) = \frac{1}{1 - \langle b(z), b(w) \rangle} \quad (z,w \in \mathbb{D}).
\end{equation*}
As is well known, the key point in proving the existence of an embedding is to show that the real numbers
$(c_n)_{n=1}^\infty$ defined by the power series identity
\begin{equation}
  \label{eqn:c_n}
  \sum_{n=1}^\infty c_n (z \overline{w})^n = 1 - \frac{1}{k(z,w)}
\end{equation}
satisfy $c_n \ge 0$ for all $n \ge 1$.
In this case, considering \eqref{eqn:c_n} on the diagonal $z=w$, we find that $\sum_{n=1}^\infty c_n \le 1$,
so defining
\begin{equation*}
  b: \mathbb{D} \to \mathbb{B}_\infty, \quad b(z) = ( \sqrt{c_1} z, \sqrt{c_2} z^2, \sqrt{c_3} z^3 ,\ldots),
\end{equation*}
we obtain the desired embedding.

In the literature on complete Pick spaces, the usual proof of the fact that the numbers $c_n$ in \eqref{eqn:c_n} are non-negative makes
use of a lemma of Kaluza; see \cite{MS94a} or \cite[Corollary 7.41]{AM02}.
While the coefficients $c_n$ can be obtained from the Taylor coefficients of $k$
through a recursive formula, this does not give an explicit formula for $c_n$.
In the discussion following Corollary 11 in \cite{MS94a}, Marshall and Sundberg
ask if it is possible to see directly from from the formula for the kernel $k$ that $c_n \ge 0$
for all $n \ge 1$.

We show that an argument from the work of Kluyver \cite{Kluyver24} yields an explicit formula for $c_n$, which shows in particular that $c_n \ge 0$
for all $n \ge 1$.

\begin{proposition}
  \label{prop:explicit_intro}
  For $n \ge 1$, we have
  \begin{equation*}
    c_n = \int_0^1 t \frac{\Gamma(n-t)}{\Gamma(n+1) \Gamma(1-t)} \, dt \ge 0.
  \end{equation*}
\end{proposition}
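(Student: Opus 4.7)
The plan is to obtain the integral representation of $c_n$ by expressing $1-1/k(z,w)$ through an elementary integral and then expanding the integrand via the binomial series. Writing $x = z\overline{w}$, the definition of $k$ gives
\[
  1 - \frac{1}{k(z,w)} = 1 + \frac{x}{\log(1-x)}.
\]
The starting observation is the elementary computation
\[
  \int_0^1 (1-x)^s \, ds = \frac{(1-x)-1}{\log(1-x)} = \frac{-x}{\log(1-x)},
\]
valid for $x \in (0,1)$ and extending by analyticity to the whole unit disc.

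Next, I would expand $(1-x)^s = \sum_{n=0}^\infty \binom{s}{n}(-x)^n$ by the binomial series and interchange the sum with the integral, which is justified by absolute convergence uniformly in $s \in [0,1]$ for $|x|$ bounded away from $1$. This gives
\[
  \frac{-x}{\log(1-x)} = \sum_{n=0}^\infty (-x)^n \int_0^1 \binom{s}{n} \, ds.
\]
Since $\binom{s}{0}=1$, the $n=0$ term equals $1$ and cancels. Comparing the resulting power series with \eqref{eqn:c_n} yields
\[
  c_n = (-1)^{n+1} \int_0^1 \binom{s}{n}\, ds \qquad (n \ge 1).
\]

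Finally I would convert this to the form in the statement. For $s \in (0,1)$ the factor $s$ is positive while each of $s-1,\ldots,s-n+1$ is negative, so
\[
  (-1)^{n+1}\binom{s}{n} = \frac{s \prod_{k=1}^{n-1}(k-s)}{n!}.
\]
The functional equation for $\Gamma$ applied iteratively gives $\prod_{k=1}^{n-1}(k-s) = \Gamma(n-s)/\Gamma(1-s)$, producing exactly the integrand claimed (after relabelling $s$ as $t$). Non-negativity $c_n \ge 0$ is then immediate, since the integrand is a product of non-negative quantities on $(0,1)$.

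I do not expect a serious obstacle. The only slightly delicate point is justifying the interchange of sum and integral, but this follows from standard uniform bounds on $\binom{s}{n}(-x)^n$ for $s \in [0,1]$ and $x$ in a compact subdisc. The whole argument is a short exploitation of the integral representation of $-x/\log(1-x)$, which is essentially the device used by Kluyver in \cite{Kluyver24}; the rest is bookkeeping with the Gamma function.
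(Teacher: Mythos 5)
Your proposal is correct and follows essentially the same route as the paper's proof, which likewise reproduces Kluyver's computation: the integral representation $-x/\log(1-x)=\int_0^1(1-x)^s\,ds$, the binomial expansion with the sum--integral interchange justified by the uniform bound $|\binom{s}{n}|\le 1$ for $s\in[0,1]$, and the rewriting of $(-1)^{n+1}\binom{s}{n}$ via the functional equation of $\Gamma$. All the sign bookkeeping and the Gamma-function identities in your argument check out.
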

This result will be proved in Section \ref{sec:explicit_embedding}.

\subsection{Reader's guide}
The remainder of this note is organized as follows.
In Section \ref{sec:prelim}, we recall some necessary preliminaries for the proof
of the main result, which occupies Section \ref{sec:proof}.
In Section \ref{sec:other}, we show that Theorem \ref{T:main} also rules out the existence
of some other types of embedding for the Dirichlet space. Finally, in Section \ref{sec:explicit_embedding},
we prove Proposition \ref{prop:explicit_intro}.

\begin{remark*}
A draft version of this note was circulated in 2016.
Since the draft has been cited a few times, the author decided to make this note more widely available.
\end{remark*}

\section{Preliminaries about metrics}
\label{sec:prelim}

In this section, we collect the necessary background on metrics induced by reproducing kernels.
More information on this topic can be found in \cite{ARS+11} and \cite[Chapter 9]{AM02}.

\subsection{Metrics induced by kernels}
Let $\mathcal{H}$ be a reproducing kernel Hilbert space of functions on a set $X$
with reproducing kernel $K$.
Throughout, we will assume that $K(z,z) \neq 0$ for all $z \in X$.
We say that $\mathcal{H}$ \emph{separates the points of $X$}
if whenever $z,w \in X$ with $z \neq w$, there exists $f \in \mathcal{H}$ with $f(z) = 0$ and $f(w) =1$.
This is equivalent to saying that $K(\cdot,z)$ and $K(\cdot,w)$ are linearly independent whenever
$z \neq w$.

We can define a pseudo-metric $\delta_{\mathcal{H}}$ on $X$ by
\begin{equation}
  \label{eqn:metric}
  \delta_{\cH}(z,w) = \Big({1-\frac{|K(z,w)|^2}{K(z,z)K(w,w)}} \Big)^{1/2}.
\end{equation}
This is a metric if $\mathcal{H}$ separates the points of $X$, see \cite[Lemma 9.9]{AM02}
or \cite[Section 4]{ARS+11}.

If $\mathcal{H} = H^2$, the Hardy space on the disc, then a computation with Szeg\H{o} kernels
shows that
\begin{equation*}
  \delta_{H^2}(z,w) = \Big| \frac{z - w}{ 1 - \overline{z} w} \Big| \quad (z,w \in \mathbb{D}),
\end{equation*}
so $\delta_{H^2}$ is the pseudohyperbolic metric on $\mathbb{D}$; see \cite[Section 2]{ARS+11}.

More generally, the metric $\delta_{H^2_d}$ associated with the Drury--Arveson
space turns out to be the pseudohyperbolic metric on $\mathbb{B}_d$ for $d \in \mathbb{N}$.
Background information on the pseudohyperbolic metric on the ball can be found in \cite{DW07}.
To recall, we have for each $a \in \mathbb{B}_d$ the biholomorphic
automorphism
\begin{equation*}
  \varphi_a: \mathbb{B}_d \to \mathbb{B}_d, \quad \varphi_a(z) = \frac{a - P_a(z) - s_a Q_a(z)}{1 - \langle z,a \rangle },
\end{equation*}
where $P_a$ is the orthogonal projection onto $\mathbb{C} a$, $Q_a = I - P_a$ and $s_a = (1 - \|a\|^2)^{1/2}$;
see \cite[Section 2.2]{Rudin08}.
The identity
\begin{equation*}
  1 - \|\varphi_w(z)\|^2 = \frac{(1 -\|w\|^2)(1-\|z\|^2)}{| 1 - \langle z,w \rangle|^2 }
\end{equation*}
(see \cite[Theorem 2.2.2 (iv)]{Rudin08}) shows that
\begin{equation*}
  \delta_{H^2_d}(z,w) = \|\varphi_w(z)\|,
\end{equation*}
which by definition is the pseudohyperbolic distance between $z$ and $w$,
usually denoted by $\rho(z,w)$.

In the case of the Dirichlet space, we obtain the formula
\begin{equation}
  \label{eqn:Dirichlet_metric}
  \delta_{\mathcal{D}}(z,w) = \Big( 1 - \frac{|\log(1  - \overline{z}w)|^2}{\log(1 - |z|^2) \log(1 - |w|^2)} \Big)^{1/2}
\end{equation}
for $z,w \in \mathbb{D} \setminus \{0\}$.

\subsection{Length of curves}
\label{ss:length}

The metrics $\delta_{\mathcal{H}}$ defined in \eqref{eqn:metric} will not be quite sufficient
in our setting. In addition, we will make use of length metrics induced
by the metrics $\delta_{\mathcal{H}}$.
Background on this topic can be found in \cite{ARS+11} and \cite{MPS85}, see also \cite{JP13}.

If $(X,d)$ is a metric space and $\gamma: [a,b] \to X$ is a (continuous) curve,
the \emph{length}  of $\gamma$ with respect to $d$ is defined by
\begin{equation*}
  \ell_d(\gamma) = \sup
  \left\{ \sum_{j=0}^{n-1} d(\gamma(t_j),\gamma(t_{j+1})) : a = t_0 < t_1 < \ldots < t_n = b \right\}.
\end{equation*}
We also set
\begin{equation*}
  d^*(z,w) = \inf \{ \ell_d(\gamma): \gamma \text{ is a curve joining $z$ to $w$} \}.
\end{equation*}

It is well known that if $\rho$ is the pseudo-hyperbolic metric on $\mathbb{B}_d$, then $\rho^*$ is the Poincar\'e--Bergman metric
\begin{equation*}
  \beta(z,w) = \tanh^{-1} \rho(z,w) = \frac{1}{2} \log \Big( \frac{1 + \rho(z,w)}{1 - \rho(z,w)} \Big).
\end{equation*}
We will indicate below how to see this fact in the present framework of metrics induced by kernels;
see also \cite[Chaper 1]{JP13} for a direct proof in the case $d=1$.

Mazur, Pflug and Skwarzcy\'nski \cite{MPS85} showed that if the metric $d$ is derived from the Bergman kernel
of a domain in $\mathbb{C}^d$
in a manner similar to \eqref{eqn:metric}, then the associated metric $d^*$
is essentially the Bergman metric of the domain, which is a Riemannian
metric that can be explicitly computed from the Bergman kernel;
see for instance \cite[Section 1.4]{Krantz92} for background on the Bergman metric.
Arcozzi, Rochberg, Sawyer and Wick \cite{ARS+11} observed that the results of \cite{MPS85}
extend to much more general spaces of holomorphic functions.
We will make use of these results in the case of the Dirichlet space and of the Drury--Arveson space.

\begin{lemma}
  \label{lem:metrics}
  \leavevmode
  \begin{enumerate}[label=\normalfont{(\alph*)},wide]
    \item For the pseudohyperbolic metric $\rho$ on $\mathbb{B}_d$, the metric $\rho^*$ coincides with the Poincar\'e--Bergman metric $\beta$.
    \item Let $\gamma: [a,b] \to \mathbb{D}$ be a piecewise $C^1$ curve. Then the length of $\gamma$ with respect to the Dirichlet
      space metric $\delta_{\mathcal{D}}$ is given by
  \begin{equation*}
    \ell_{\delta_{\mathcal{D}}}(\gamma) = \int_{a}^b g(\gamma(t))^{1/2} |\gamma'(t)| \, d t,
  \end{equation*}
  where $g: \mathbb{D} \to [0,\infty)$ is the continuous function given by
  \begin{equation*}
    g(z) = 
    \frac{ \log \big( \frac{1}{1 - |z|^2} \big)- |z|^2}{ \big(\log \big( \frac{1}{1 - |z|^2} \big) \big)^2 (1-|z|^2)^2} \quad (z \neq 0)
  \end{equation*}
  and $g(0) = \frac{1}{2}$.
  \end{enumerate}
\end{lemma}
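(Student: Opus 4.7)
The plan is to derive both parts from the general infinitesimal length formula for metrics induced by reproducing kernels, developed in \cite{MPS85} and \cite{ARS+11}. For a reproducing kernel Hilbert space with kernel $K$ satisfying $K(z,z) > 0$, I would set $\phi(z,w) = \log K(z,w)$ near the diagonal and Taylor-expand
$$
\delta_{\mathcal H}(z, z+h)^2 = 1 - \exp\bigl( 2\operatorname{Re} \phi(z, z+h) - \phi(z,z) - \phi(z+h, z+h) \bigr)
$$
in $h$. This yields $\delta_{\mathcal H}(z, z+h)^2 = Q_z(h) + O(|h|^3)$, uniformly on compact subsets, where $Q_z$ is the positive Hermitian form with entries $\partial_{z_i} \overline{\partial_{z_j}} \phi(z,z)$. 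A standard polygonal-refinement argument then promotes this pointwise expansion to the integral length formula
$$
\ell_{\delta_{\mathcal H}}(\gamma) = \int_a^b \sqrt{Q_{\gamma(t)}(\gamma'(t))}\, dt
$$
for every piecewise $C^1$ curve $\gamma$.

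For part (a), applying this recipe to the Drury--Arveson kernel $K(z,w) = 1/(1-\langle z,w \rangle)$ produces $\phi(z,w) = -\log(1-\langle z,w \rangle)$, whose Hermitian Hessian on the diagonal equals the classical Bergman metric tensor on $\mathbb{B}_d$; this is essentially the computation in \cite[Section 2.2.6]{Rudin08}. Hence $\ell_\rho(\gamma) = \ell_\beta(\gamma)$ for every piecewise $C^1$ curve $\gamma$. Since $\beta$ is a Riemannian length metric on $\mathbb{B}_d$ whose distance is realized by geodesics, $\beta(z,w) = \inf_\gamma \ell_\beta(\gamma)$, and the identity $\rho^*(z,w) = \beta(z,w)$ follows. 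The passage from arbitrary continuous curves to piecewise $C^1$ ones is routine, using polygonal approximation together with local bi-Lipschitz equivalence of $\rho$ and the Euclidean metric on compact subsets of $\mathbb{B}_d$.

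For part (b), I would apply the same recipe to the Dirichlet kernel. Writing $v = z\bar w$ and $f(v) = -\log v + \log \log \frac{1}{1-v}$, one has $\phi(z,w) = f(z\bar w)$, and the chain rule gives $\partial_z \partial_{\bar w} \phi(z,w)\big|_{w=z} = f'(r) + r f''(r)$ with $r = |z|^2$. A direct computation of $f'$ and $f''$, followed by a clean cancellation using $(1-v) + v = 1$, produces exactly the function $g$ stated in the lemma. The value $g(0) = 1/2$ and continuity of $g$ at the origin then follow from the Taylor expansion $\log \frac{1}{1-r} = r + \frac{r^2}{2} + \frac{r^3}{3} + \cdots$ via a short asymptotic analysis.

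The main obstacle I anticipate is bridging the supremum definition of $\ell_d$ with the integral expression: one must verify that the Riemann-type sums $\sum_i \delta_{\mathcal H}(\gamma(t_i), \gamma(t_{i+1}))$ converge to the stated integral as the mesh tends to zero, which requires combining the expansion $\delta_{\mathcal H}^2 = Q + O(|h|^3)$ with uniform continuity of $Q$ along $\gamma$ and rectifiability of $\gamma$. This computation is routine but somewhat lengthy, and the cleanest approach is to invoke the corresponding general result of \cite{ARS+11} directly.
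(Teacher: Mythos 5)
Your proposal is correct and follows essentially the same route as the paper: both rest on the infinitesimal length formula of Mazur--Pflug--Skwarczy\'nski as extended in \cite[Proposition 9]{ARS+11} (which the paper simply cites rather than re-deriving), apply it to the Drury--Arveson kernel to identify $\rho^*$ with $\beta$, and compute $\partial_z\partial_{\bar z}\log\log\frac{1}{1-|z|^2}$ to obtain $g$ for the Dirichlet kernel, with the paper likewise invoking \cite[Theorem 2]{MPS85} for the passage from continuous to piecewise $C^1$ curves.
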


\begin{proof}
  It was shown in \cite[Theorem 1]{MPS85} that if $K$ is the square root of the kernel of the Bergman space on a bounded domain $D$ in $\mathbb{C}^d$
  and if $\delta_{\mathcal{H}}$ is defined as in \eqref{eqn:metric}, then the length of any piecewise $C^1$ curve $\gamma: [a,b] \to D$
  is given by
  \begin{equation}
    \label{eqn:MPS}
    \ell_{\delta_\mathcal{H}}(\gamma) = \int_{a}^b \Big( \sum_{j,k=1}^d  \frac{\partial^2 \log K(\gamma(t),\gamma(t))}{\partial \overline{z_j} \partial z_k}
    \overline{\gamma_j'(t)} \gamma_k'(t) \Big)^{1/2} \,d t.
  \end{equation}
  (The result in \cite{MPS85} involves additional factors of $\frac{1}{2}$ and $\sqrt{2}$ because they consider the square root of the reproducing kernel.)
  It was observed in \cite[Proposition 9]{ARS+11} that this formula, which is proved by second order Taylor approximations to the kernel functions,
  holds much more generally, and in particular in the case of the Drury--Arveson space and of the Dirichlet space.

  (a) Applying \eqref{eqn:MPS} to the Drury--Arveson kernel $K$ and recalling that the Bergman kernel on $\mathbb{B}_d$ is given by $K^{d+1}$,
  it follows from standard results about the Poincar\'e--Bergman metric (see, for example, \cite[Proposition 1.21]{Zhu05}) that
  \begin{equation*}
    \beta(z,w) = \inf \{ \ell_{\rho}(\gamma): \gamma \text{ is a piecewise smooth curve in $\mathbb{B}_d$ joining $z$ and $w$} \}.
  \end{equation*}
  Moreover, \cite[Theorem 2]{MPS85} (see also \cite[Proposition 9]{ARS+11}) shows that the right-hand side
  remains unchanged when taking the infimum over all continuous curves, hence $\beta = \rho^*$.

  (b) We apply \eqref{eqn:MPS} to the Dirichlet kernel $k$.
  A simple computation shows that for $z \neq 0$, we have
  \begin{equation*}
    \frac{\partial^2 \log k(z,z)}{\partial z \partial \overline{z}}
    = \frac{\partial^2}{\partial z \partial \overline{z}} \log \log \Big( \frac{1}{1 - |z|^2} \Big) = g(z).
  \end{equation*}
  Direct inspection of the Taylor series of $\log(1-z)$ at the origin shows that $g$ extends continuously
  to $0$ with $g(0) = \frac{1}{2}$, so the above identity holds for all $z \in \mathbb{D}$,
  and the statement follows from \eqref{eqn:MPS}.
\end{proof}

\begin{remark}
  \label{rem:easy_pseudohyperbolic}
  In Lemma \ref{lem:metrics} (a), we will actually only need the lower bound $\beta \le \rho^*$.
  This lower bound can be proved in a more elementary way as follows.
  Since $\rho = \tanh(\beta)$ and since the derivative of $\tanh$ at $0$ equals $1$,
  a simple estimate shows that $\ell_{\rho}(\gamma) = \ell_{\beta}(\gamma)$
  for any continuous curve $\gamma$, see \cite[Lemma 2.5.2 (a)]{JP13}. So if $\gamma$
  is a continuous curve joining $z$ and $w$, then $\beta(z,w) \le \ell_{\beta}(\gamma) = \ell_\rho(\gamma)$,
  which gives the inequality $\beta \le \rho^*$.
\end{remark}

\section{Proof of main result}
\label{sec:proof}

\subsection{From surjective homomorphisms to bi-Lipschitz embeddings}

In the first step towards the proof of Theorem \ref{T:main},
we show that any surjective homomorphism as in Theorem \ref{T:main}
induces a bi-Lipschitz embedding of $(\mathbb{D},\delta_{\mathcal{D}})$
into $(\mathbb{B}_d,\delta_{H^2_d})$.
We need the following characterization of the metric $\delta_{\mathcal{H}}$ in terms
of the multiplier algebra for complete Pick spaces.
It applies in particular to the Dirichlet space and to the Drury-Arveson space.

\begin{lemma}
  \label{lem:metric_pick}
  Let $\mathcal{H}$ be a complete Pick space on $X$. Then
  \begin{equation*}
    \delta_{\cH}(z,w) = \sup \{|\varphi(z)| : \varphi(w) = 0 , \|\varphi\|_{\Mult(\cH)} \le 1 \}
  \end{equation*}
  for all $z,w \in X$. Moreover, the supremum is attained.
\end{lemma}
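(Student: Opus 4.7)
My plan is to apply the Nevanlinna--Pick interpolation theorem for $\cH$ at the two points $z,w \in X$. For fixed $z,w$ and $\lambda \in \mathbb{C}$, consider the interpolation problem of finding $\varphi \in \Mult(\cH)$ with $\|\varphi\|_{\Mult(\cH)} \le 1$, $\varphi(w) = 0$, and $\varphi(z) = \lambda$. Since $\cH$ is a complete Pick space, such a $\varphi$ exists if and only if the $2\times 2$ Pick matrix
\begin{equation*}
  \begin{pmatrix}
    (1 - |\lambda|^2) K(z,z) & K(z,w) \\
    \overline{K(z,w)} & K(w,w)
  \end{pmatrix}
\end{equation*}
is positive semidefinite; this is the relevant specialisation of the complete Pick interpolation theorem (cf.\ \cite{AM02}).

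Next, I would compute when this matrix is positive semidefinite. Its diagonal entries are non-negative (using $|\lambda| \le 1$ and $K(z,z),K(w,w) > 0$), so positivity is equivalent to the determinant being non-negative:
\begin{equation*}
  (1 - |\lambda|^2) K(z,z) K(w,w) \ge |K(z,w)|^2,
\end{equation*}
i.e.\ $|\lambda|^2 \le 1 - \frac{|K(z,w)|^2}{K(z,z) K(w,w)} = \delta_{\cH}(z,w)^2$. Consequently, as $\varphi$ ranges over admissible multipliers, $|\varphi(z)| = |\lambda|$ takes exactly the values in $[0, \delta_{\cH}(z,w)]$, so the supremum in the statement equals $\delta_{\cH}(z,w)$.

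For the attainment claim, observe that at the extremal value $\lambda = \delta_{\cH}(z,w)$ the determinant is exactly zero, so the Pick matrix is still positive semidefinite. The complete Pick theorem therefore produces an actual multiplier $\varphi$ with $\|\varphi\|_{\Mult(\cH)} \le 1$, $\varphi(w) = 0$, and $\varphi(z) = \delta_{\cH}(z,w)$.

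There is no serious obstacle here: the result reduces immediately to the scalar two-point Pick problem, and both directions fall out of a $2\times 2$ determinant computation. The only point worth highlighting is that necessity of the Pick condition is automatic (from $I - M_\varphi M_\varphi^* \ge 0$ tested against linear combinations of $K(\cdot,z)$ and $K(\cdot,w)$), whereas the sufficiency direction at the extremal parameter $\lambda = \delta_{\cH}(z,w)$ is precisely where the complete Pick hypothesis on $\cH$ is used.
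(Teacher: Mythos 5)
Your proof is correct and follows essentially the same route as the paper: reduce to the two-point Pick problem, write down the $2\times 2$ Pick matrix with $(1-|\lambda|^2)K(z,z)$ in the corner, and read off the condition $|\lambda| \le \delta_{\mathcal H}(z,w)$ from the determinant, with attainment at the extremal $\lambda$ where the determinant vanishes. Nothing further is needed.
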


\begin{proof}
  The result is well known. For the convenience of the reader, we provide a short argument.
  Let $z,w \in X$ and let $\lambda \in \mathbb{C}$. By the Pick property, there exists
  $\varphi \in \Mult(\mathcal{H})$ with $\|\varphi\|_{\Mult(\mathcal{H})} \le 1$,
  $\varphi(w) = 0$ and $\varphi(z) = \lambda$ if and only if the Pick matrix
  \begin{equation*}
    \begin{bmatrix}
      K(z,z) (1 - |\lambda|^2) & K(z,w) \\ K(w,z) & K(w,w)
    \end{bmatrix}
  \end{equation*}
  is positive semi-definite. By Sylvester's criterion, this happens if and only
  if $|\lambda| \le 1$ and the determinant of the matrix is non-negative,
  which after rearranging is seen to be equivalent to
  \begin{equation*}
    |\lambda| \le \Big( 1 - \frac{|K(z,w)|^2}{K(z,z) K(w,w)} \Big)^{1/2}.
  \end{equation*}
  Since the right-hand side equals $\delta_{\mathcal{H}}(z,w)$, the result follows.
\end{proof}

If $E$ is a Banach space, we denote the closed unit ball of $E$ by $B_E$.
If $T: E \to F$ is a surjective continuous linear operator between Banach spaces,
let
\begin{equation*}
  q(T) = \sup \{r \ge 0 : T(B_E) \supset r B_F \}
\end{equation*}
be the surjectivity modulus of $T$. By the open mapping theorem, $q(T) > 0$.
If $\widetilde{T}: E / \ker(T) \to F$ denotes the induced operator, then $q(T) = \| \widetilde{T}^{-1}\|^{-1}$.

The following result generalizes \cite[Theorem 6.2]{DHS15}.
The Lipschitz constant obtained below is an improvement of the Lipschitz constant obtained there
by a factor of $2$.

\begin{proposition}
  \label{prop:iso_bi_lipschitz}
  Let $\mathcal{H}$ and $\mathcal{K}$ be complete Pick spaces on $X$ and $Y$, respectively.
  Let $F: Y \to X$ be a mapping that induces a homomorphism
  \begin{equation*}
    \Phi: \Mult(\mathcal{H}) \to \Mult(\mathcal{K}), \quad \varphi \mapsto \varphi \circ F.
  \end{equation*}
\begin{enumerate}[label=\normalfont{(\alph*)},wide]
    \item The homomorphism $\Phi$ is continuous, and $F$ is Lipschitz with
      \begin{equation*}
        \delta_{\mathcal{H}}(F(z),F(w)) \le \|\Phi\| \delta_{\mathcal{K}}(z,w) \quad \text{ for all } z,w \in Y.
      \end{equation*}
    \item If $\Phi$ is surjective, then $q(\Phi) > 0$ and $F$ is bi-Lipschitz with
      \begin{equation*}
        q(\Phi) \delta_{\mathcal{K}} (z,w)  \le  \delta_{\mathcal{H}}(F(z),F(w)) \le \|\Phi\| \delta_{\mathcal{K}}(z,w) \quad \text{ for all } z,w \in Y.
      \end{equation*}
  \end{enumerate}
\end{proposition}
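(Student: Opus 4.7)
The plan is to translate the hypothesis into statements about the extremal problems that define $\delta_{\mathcal{H}}$ and $\delta_{\mathcal{K}}$ via Lemma \ref{lem:metric_pick}, and to transport multipliers back and forth between the two multiplier algebras along $\Phi$, using the closed graph theorem for boundedness and the open mapping theorem in part (b).

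First, for part (a), I would establish continuity of $\Phi$ by the closed graph theorem. Suppose $\varphi_n \to \varphi$ in $\Mult(\mathcal{H})$ and $\Phi(\varphi_n) \to \psi$ in $\Mult(\mathcal{K})$. Because the multiplier norm dominates the sup norm on an RKHS whose kernel is nowhere zero, both sequences converge pointwise on $X$ and $Y$ respectively. Evaluating at $y \in Y$ gives $\psi(y) = \lim \varphi_n(F(y)) = \varphi(F(y)) = \Phi(\varphi)(y)$, so the graph of $\Phi$ is closed. Once $\Phi$ is bounded, the Lipschitz estimate is immediate from Lemma \ref{lem:metric_pick}: given $z,w \in Y$, use the lemma in $\mathcal{H}$ to produce a norm-one extremal $\varphi \in \Mult(\mathcal{H})$ with $\varphi(F(w)) = 0$ and $|\varphi(F(z))| = \delta_{\mathcal{H}}(F(z),F(w))$. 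Then $\Phi(\varphi)/\|\Phi\|$ vanishes at $w$, has multiplier norm at most $1$, and is therefore a competitor in the extremal problem for $\delta_{\mathcal{K}}(z,w)$. Rearranging produces the bound in (a).

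For part (b), the positivity $q(\Phi) > 0$ is the open mapping theorem, and the upper Lipschitz bound is just part (a). For the lower bound, given $z,w \in Y$, I would apply Lemma \ref{lem:metric_pick} in $\mathcal{K}$ to select a norm-one extremal $\psi \in \Mult(\mathcal{K})$ with $\psi(w) = 0$ and $|\psi(z)| = \delta_{\mathcal{K}}(z,w)$. Given $\varepsilon > 0$, the definition of the surjectivity modulus supplies a lift $\varphi \in \Mult(\mathcal{H})$ with $\Phi(\varphi) = \psi$ and $\|\varphi\|_{\Mult(\mathcal{H})} \le 1/q(\Phi) + \varepsilon$. The essential structural point is that because $\Phi$ is composition with $F$, the vanishing condition $\psi(w) = 0$ automatically forces $\varphi(F(w)) = 0$, so $\varphi$ is admissible in the extremal problem for $\delta_{\mathcal{H}}(F(z),F(w))$. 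Lemma \ref{lem:metric_pick} applied in $\mathcal{H}$ then gives $\delta_{\mathcal{H}}(F(z),F(w)) \ge |\varphi(F(z))|/\|\varphi\| = |\psi(z)|/\|\varphi\|$; inserting the bounds and sending $\varepsilon \to 0$ yields $\delta_{\mathcal{H}}(F(z),F(w)) \ge q(\Phi)\,\delta_{\mathcal{K}}(z,w)$.

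I do not expect a serious obstacle: the argument is essentially a dualization of the Lipschitz direction via the open mapping theorem. The one observation that makes everything fit together cleanly is that any lift $\varphi$ of $\psi$ inherits the vanishing condition at $F(w)$ for free from $\psi(w) = 0$, which is precisely what couples the two extremal problems through $\Phi$ and allows a quantitatively sharp estimate. The improvement over \cite[Theorem 6.2]{DHS15} should come simply from using the true surjectivity modulus (rather than bounding lifts through a less efficient construction) combined with the fact that the sup in Lemma \ref{lem:metric_pick} is actually attained.
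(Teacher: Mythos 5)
Your proof is correct and follows essentially the same route as the paper: both directions of the Lipschitz estimate are obtained by transporting the extremal multipliers of Lemma \ref{lem:metric_pick} through $\Phi$, with the open mapping theorem and the surjectivity modulus supplying the lift in part (b) (your lift of $\psi$ with norm $\le 1/q(\Phi)+\varepsilon$ is just a rescaling of the paper's lift of $\varepsilon\psi$ by a norm-one element). The only cosmetic difference is that you prove continuity of $\Phi$ by a direct closed-graph argument using boundedness of point evaluations, whereas the paper cites the standard automatic continuity result for homomorphisms into commutative semisimple Banach algebras; both are fine.
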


\begin{proof}
  (a)
  Since multiplier algebras are commutative semi-simple Banach algebras, the homomorphism
  $\Phi$ is continuous by a standard automatic continuity result;
  see, for instance, \cite[Corollary 2.1.10]{Kaniuth09}.
  Let $C > \|\Phi\|$ and let $z,w \in Y$.
  By Lemma \ref{lem:metric_pick}, there exists $\varphi \in \Mult(\mathcal{H})$
  of norm at most $1$ such that $\varphi(F(z)) = 0$ and $\varphi(F(w)) = \delta_{\mathcal{H}}(F(z),F(w))$.
  Let $\psi = C^{-1} (\varphi \circ F)$. Then $\psi$ belongs to the unit ball of $\Mult(\mathcal{K})$
  and $\psi(z) = 0$ and $\psi(w) = C^{-1} \delta_{\mathcal{H}}(F(z),F(w))$.
  Another application of Lemma \ref{lem:metric_pick} shows that
  \begin{equation*}
    \delta_{\mathcal{K}}(z,w) \ge C^{-1} \delta_{\mathcal{H}}(F(z),F(w)).
  \end{equation*}
  Since $C > \|\Phi\|$ was arbitrary, the result follows.

  (b) Suppose that $\Phi$ is surjective. By the open mapping theorem, $q(T) > 0$.
  Let $z,w \in Y$. By Lemma \ref{lem:metric_pick}, there exists $\psi \in \Mult(\mathcal{K})$
  of norm at most $1$
  with $\psi(z) = 0$ and $\psi(w) = \delta_{\mathcal{K}}(z,w)$.
  Let $0 <\varepsilon < q(T)$.
  By definition of $q(\Phi)$, there exists
  $\varphi \in \Mult(\mathcal{H})$ of norm at most $1$ such that $\varphi \circ F = \varepsilon \psi$.
  Then $\varphi(F(z)) = 0$ and $\varphi(F(w)) = \varepsilon \delta_{\mathcal{K}}(z,w)$,
  so Lemma \ref{lem:metric_pick} shows that
  \begin{equation*}
    \delta_{\mathcal{H}}(F(z),F(w)) \ge \varepsilon \delta_{\mathcal{K}}(z,w).
  \end{equation*}
  Since $\varepsilon < q(T)$ was arbitrary, the first inequality follows.
  The second inequality was already established in (a).
\end{proof}

The following consequence is immediate from part (b) of Proposition \ref{prop:iso_bi_lipschitz}.

\begin{corollary}
  \label{cor:bi_Lipschitz}
  Let $d \in \mathbb{N}$.
  If $b: \mathbb{D} \to \mathbb{B}_d$ induces a surjective homomorphism
  \begin{equation*}
    \Mult(H^2_d) \to \Mult(\mathcal{D}), \quad \varphi \mapsto \varphi \circ b,
  \end{equation*}
  then $b$ is a bi-Lipschitz map from $(\mathbb{D},\delta_{\mathcal{D}})$ into $(\mathbb{B}_d, \delta_{H^2_d})$.
  \qed
\end{corollary}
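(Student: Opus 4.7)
The plan is to observe that this corollary is an immediate specialization of part (b) of Proposition \ref{prop:iso_bi_lipschitz}, so the proof will consist of identifying the correct data and verifying the hypotheses.

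First I would set $\mathcal{H} = H^2_d$ on $X = \mathbb{B}_d$ and $\mathcal{K} = \mathcal{D}$ on $Y = \mathbb{D}$, and take $F = b : \mathbb{D} \to \mathbb{B}_d$. The hypotheses of Proposition \ref{prop:iso_bi_lipschitz} require that both spaces be complete Pick spaces; the Drury--Arveson space $H^2_d$ is the universal complete Pick space (this is the content of the Agler--\mcc\ theorem recalled in the introduction), and the Dirichlet space $\mathcal{D}$ is a complete Pick space as discussed in the introduction. Both facts are taken as known, so no verification is needed beyond citing them.

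Next I would invoke part (b) of Proposition \ref{prop:iso_bi_lipschitz} directly. The hypothesis is precisely that $b$ induces a surjective homomorphism $\Phi : \Mult(H^2_d) \to \Mult(\mathcal{D})$, $\varphi \mapsto \varphi \circ b$, which is exactly what is assumed in the corollary. The conclusion of part (b) then reads
\begin{equation*}
  q(\Phi)\, \delta_{\mathcal{D}}(z,w) \;\le\; \delta_{H^2_d}(b(z),b(w)) \;\le\; \|\Phi\|\, \delta_{\mathcal{D}}(z,w) \quad \text{for all } z,w \in \mathbb{D},
\end{equation*}
with $0 < q(\Phi) \le \|\Phi\| < \infty$. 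This is the bi-Lipschitz statement, so the corollary follows.

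There is no real obstacle here; the only thing to be careful about is to match the roles of the two spaces in Proposition \ref{prop:iso_bi_lipschitz} (the map goes \emph{from} the domain of $\mathcal{K}$ \emph{to} the domain of $\mathcal{H}$), which matches our situation since $b$ goes from $\mathbb{D}$ (the domain for $\mathcal{D}$) to $\mathbb{B}_d$ (the domain for $H^2_d$), and correspondingly $\varphi \circ b$ takes multipliers of $H^2_d$ to multipliers of $\mathcal{D}$. Once this bookkeeping is in place, the corollary is a one-line consequence.
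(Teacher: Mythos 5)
Your proposal is correct and matches the paper exactly: the corollary is stated there as an immediate consequence of Proposition \ref{prop:iso_bi_lipschitz}(b), applied with $\mathcal{H} = H^2_d$, $\mathcal{K} = \mathcal{D}$, and $F = b$, just as you do. The bookkeeping of which space plays which role is the only point of care, and you have it right.
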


We will show that no such bi-Lipschitz map exists.

\subsection{Lipschitz maps into the ball}

In the sequel, we will simply write $\delta = \delta_{\mathcal{D}}$ for the metric on $\mathbb{D}$ induced by the Dirichlet space.
We also continue to write $\rho = \delta_{H^2_d}$ for the pseudo-hyperbolic metric on $\mathbb{B}_d$.

Our next goal is to show that if $f: (\mathbb{D},\delta) \to (\mathbb{B}_d, \rho)$ is Lipschitz with $f(0) = 0$,
then $\|f(z)\|$ can only approach $1$ very slowly as $|z|$ approaches $1$;
that is, we seek upper bounds on $\|f(z)\|$.
For isometric maps $f$, this was done by Rochberg \cite{Rochberg16} by noting
that $\|f(z)\| = \rho(0, f(z)) = \delta(0,z)$ in this case and estimating $1 - \delta(0,z)$.
In the bi-Lipschitz setting, the upper bound on $\|f(z)\|$ that can be deduced from this argument
takes the form $\|f(z)\| \le C \delta(0,z)$ for some constant $C$.
This is not a very useful estimate if $|z|$ is close to $1$, because
we always have the trivial upper bound $\|f(z)\| \le 1$.
Instead, we will use lengths of curves as defined in Subsection \ref{ss:length}
to obtain a better upper bound on $\|f(z)\|$.

If $f,g: [0,1) \to [0,\infty)$ are two functions, we will write
$f \sim g$ as $r \to 1$ if $f$ and $g$ do not vanish near $1$ and $\lim_{r \to 1}
\frac{f(r)}{g(r)} = 1$.

\begin{lemma}
  \label{lem:Dirichlet_length}
  Let $0 \le r <1$ and let $\gamma_r: [0,r] \to \mathbb{D}, t \mapsto t$. Then
  \begin{equation*}
    \ell_{\delta}(\gamma_r) \sim \Big( \log \Big( \frac{1}{1 - r} \Big) \Big)^{1/2} \quad \text{ as } r \to 1.
  \end{equation*}
  Hence, there exists a constant $M \in (0,\infty)$ such that
  \begin{equation*}
    \ell_\delta(\gamma_r) \le M \Big( \log \Big(\frac{1}{1-r} \Big) \Big)^{1/2} \quad \text{ for all } r \in [0,1).
  \end{equation*}
\end{lemma}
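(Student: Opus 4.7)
The plan is to reduce to an integral via Lemma \ref{lem:metrics}(b) and then extract the leading asymptotics by direct computation.

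First, since $\gamma_r(t) = t$ satisfies $|\gamma_r'(t)| = 1$, Lemma \ref{lem:metrics}(b) gives
\begin{equation*}
  \ell_\delta(\gamma_r) = \int_0^r g(t)^{1/2} \, dt,
\end{equation*}
with $g$ as in the statement of that lemma. The computation therefore reduces to understanding the asymptotic behavior of the integrand as $t \to 1$.

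Next, I would extract the leading-order behavior of $g(t)^{1/2}$. Writing $L(t) = \log(1/(1-t^2))$, we have $L(t) \to \infty$ as $t \to 1$, so $L(t) - t^2 \sim L(t)$, and since $1 - t^2 = (1-t)(1+t) \sim 2(1-t)$ and $L(t) \sim \log(1/(1-t))$, it follows that
\begin{equation*}
  g(t)^{1/2} = \frac{(L(t) - t^2)^{1/2}}{L(t) (1-t^2)} \sim \frac{1}{2(1-t)(\log(1/(1-t)))^{1/2}} \quad \text{as } t \to 1.
\end{equation*}
Set $h(r) = (\log(1/(1-r)))^{1/2}$; differentiating gives $h'(r) = \frac{1}{2(1-r)(\log(1/(1-r)))^{1/2}}$, so the asymptotic relation above reads exactly $g(r)^{1/2} \sim h'(r)$ as $r \to 1$. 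Since both $\ell_\delta(\gamma_r)$ and $h(r)$ tend to infinity as $r \to 1$, an application of L'H\^opital's rule yields
\begin{equation*}
  \lim_{r \to 1} \frac{\ell_\delta(\gamma_r)}{h(r)} = \lim_{r \to 1} \frac{g(r)^{1/2}}{h'(r)} = 1,
\end{equation*}
which is the claimed asymptotic equivalence. (Equivalently, one can substitute $u = \log(1/(1-t))$ in the leading-order integral and check that the result is $h(r) + O(1)$.)

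Finally, for the uniform upper bound, observe that the ratio $\ell_\delta(\gamma_r)/h(r)$ is continuous on $(0,1)$. It tends to $1$ as $r \to 1$ by the above, and since $g$ is continuous at $0$ with $g(0) = 1/2$, we have $\ell_\delta(\gamma_r) \sim r/\sqrt{2}$ while $h(r) \sim r^{1/2}$ as $r \to 0^+$, so the ratio tends to $0$ there. Hence it is bounded on $(0,1)$, and since both sides vanish at $r=0$, we may pick $M \in (0,\infty)$ as required. No step poses a real obstacle; the only care needed is in verifying the asymptotic $g(r)^{1/2} \sim h'(r)$ cleanly.
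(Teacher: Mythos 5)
Your proof is correct and follows essentially the same route as the paper: reduce to $\int_0^r g(t)^{1/2}\,dt$ via Lemma \ref{lem:metrics}(b), observe $g(r)^{1/2}\sim \frac{d}{dr}\big(\log\frac{1}{1-r}\big)^{1/2}$, and conclude by L'H\^opital. The only (harmless) difference is in the uniform bound, where you use continuity and boundedness of the ratio near both endpoints, while the paper bounds $g$ on $[0,r_0]$ and uses $r\le \log\frac{1}{1-r}$ directly.
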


\begin{proof}
  Let $g: \mathbb{D} \to [0,\infty)$ be the continuous function appearing in part (b) of Lemma \ref{lem:metrics}.
  By that lemma, $\ell_{\delta}(\gamma_r) = \int_{0}^r g(t)^{1/2} \, dt$, hence
  \begin{equation*}
    \frac{d \ell_{\delta}(\gamma_r)}{d r} = g(r)^{1/2}
    \sim \frac{1}{\big( \log \big(\frac{1}{1 - r^2} \big) \big)^{1/2} (1 - r^2)}
    \sim \frac{1}{2 \big( \log \big(\frac{1}{1 - r} \big) \big)^{1/2} (1-r)}
  \end{equation*}
  as $r \to 1$. On the other hand,
  \begin{equation*}
    \frac{d}{dr} 
    \Big( \log \Big( \frac{1}{1 - r} \Big) \Big)^{1/2} = 
    \frac{1}{2 \big( \log \big(\frac{1}{1 - r} \big) \big)^{1/2} (1-r)}.
  \end{equation*}
  The first statement now follows from L'H\^opital's rule.

  To deduce the second statement from the first one, it suffices to show that
  the desired estimate holds for fixed $r_0 < 1$ and all $r \in [0,r_0]$.
  But if $r_0 <1$, then $g$ is bounded on $[0,r_0]$.
  So if $M > 0$ is such that $|g(t)|^{1/2} \le M$ for all $t \in [0,r_0]$, then
  for all $r \in [0,r_0]$, we have
  \begin{equation*}
    \ell_{\delta}(\gamma_r) \le M r \le M r^{1/2} \le M \Big( \log \Big( \frac{1}{1-r} \Big) \Big)^{1/2}.
  \end{equation*}
  as desired.
\end{proof}

The following proposition now shows that for any Lipschitz map $f: (\mathbb{D},\delta) \to (\mathbb{B}_d,\rho)$
with $f(0) = 0$, the quantity $\|f(z)\|$ can only approach $1$ very slowly as $|z| \to 1$.

\begin{proposition}
  \label{prop:Lipschitz_slow_growth}
  Let $f: (\mathbb{D},\delta) \to (\mathbb{B}_d,\rho)$ be a Lipschitz mapping with $f(0) = 0$.
  Then there exists a constant $C \in (0,\infty)$ such that
  \begin{equation*}
    \|f(z)\| \le 1 - \exp \Big( - C \Big( \log \Big( \frac{1}{1 - |z|} \Big) \Big)^{1/2} \Big) \quad \text{ for all } z \in \mathbb{D}.
  \end{equation*}
  In particular, for any $\alpha > 0$, there exists $r_0 \in (0,1)$ such that
  \begin{equation*}
    \|f(z)\| \le 1 - (1 - |z|)^\alpha
  \end{equation*}
  for all $z \in \mathbb{D}$ with $|z| \ge r_0$.
\end{proposition}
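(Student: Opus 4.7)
The plan is to use the length-metric machinery from Subsection \ref{ss:length} together with Lemma \ref{lem:Dirichlet_length}: bound the $\delta$-length of a radial curve in $\mathbb{D}$, push it forward through $f$ using the Lipschitz estimate to bound the $\rho$-length of the image, and then translate this into a Poincar\'e--Bergman bound on $\|f(z)\|$.

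Concretely, let $L$ be a Lipschitz constant of $f$ and fix $z \in \mathbb{D}$. Consider the straight-line curve $\gamma_z: [0,1] \to \mathbb{D}$, $\gamma_z(t) = tz$. The Dirichlet metric $\delta$ is rotation-invariant (evident from the formula \eqref{eqn:Dirichlet_metric}), so $\ell_\delta(\gamma_z)$ depends only on $|z|$ and in fact coincides with $\ell_\delta(\gamma_{|z|})$ from Lemma \ref{lem:Dirichlet_length}. Hence there is an absolute constant $M$ with
\begin{equation*}
  \ell_\delta(\gamma_z) \le M \Bigl(\log\tfrac{1}{1-|z|}\Bigr)^{1/2}.
\end{equation*}
Since $f$ is $L$-Lipschitz from $(\mathbb{D},\delta)$ to $(\mathbb{B}_d,\rho)$, applying the sup that defines length term-by-term to any partition gives $\ell_\rho(f\circ\gamma_z) \le L\,\ell_\delta(\gamma_z)$.

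Next I invoke the bound $\beta \le \rho^*$ (Lemma \ref{lem:metrics}(a), or equivalently Remark \ref{rem:easy_pseudohyperbolic}) at the endpoints $0$ and $f(z)$: since $f(0)=0$ and $f\circ\gamma_z$ is a continuous curve in $\mathbb{B}_d$ joining $0$ to $f(z)$,
\begin{equation*}
  \beta(0,f(z)) \le \ell_\rho(f\circ\gamma_z) \le LM\Bigl(\log\tfrac{1}{1-|z|}\Bigr)^{1/2}.
\end{equation*}
Using $\rho(0,w)=\|w\|$ and the formula $\beta = \tfrac12 \log\bigl(\tfrac{1+\rho}{1-\rho}\bigr)$ from Subsection \ref{ss:length}, the left-hand side equals $\tfrac12\log\bigl(\tfrac{1+\|f(z)\|}{1-\|f(z)\|}\bigr)\ge \tfrac12 \log\bigl(\tfrac{1}{1-\|f(z)\|}\bigr)$. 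Rearranging yields
\begin{equation*}
  1 - \|f(z)\| \ge \exp\Bigl(-2LM\bigl(\log\tfrac{1}{1-|z|}\bigr)^{1/2}\Bigr),
\end{equation*}
which is the first claim with $C=2LM$.

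For the second assertion, set $u = \log\tfrac{1}{1-|z|}$ and observe that the first inequality gives $\|f(z)\| \le 1 - (1-|z|)^\alpha$ exactly when $Cu^{1/2} \le \alpha u$, i.e.\ when $u \ge (C/\alpha)^2$; this holds for all $|z|$ larger than some $r_0 \in (0,1)$. I do not anticipate a real obstacle here: the only point that deserves care is passing the Lipschitz inequality through the definition of length (for which the partition-sup definition of $\ell_d$ suffices) and the comparison $\beta\le\rho^*$, which is precisely what Remark \ref{rem:easy_pseudohyperbolic} was put in place to justify in an elementary way.
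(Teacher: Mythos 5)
Your argument is correct and follows essentially the same route as the paper's proof: push the radial curve through $f$, bound its $\rho$-length by $L\,\ell_\delta(\gamma_{|z|})$, invoke Lemma \ref{lem:Dirichlet_length} and the comparison $\beta\le\rho^*$, and rearrange using $\beta(0,w)=\tfrac12\log\tfrac{1+\|w\|}{1-\|w\|}\ge\tfrac12\log\tfrac{1}{1-\|w\|}$, arriving at the same constant $C=2LM$. The deduction of the second assertion is likewise the same computation as in the paper.
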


\begin{proof}
  Since $f$ is Lipschitz, there exists $L \in (0,\infty)$ such that
  $\rho(f(z),f(w)) \le L \delta(z,w)$ for all
  $z,w \in \bD$. It easily follows from the definition of the length of a curve that
  \begin{equation*}
    \ell_\rho (f \circ \gamma) \le L \ell_\delta(\gamma)
  \end{equation*}
  for every curve $\gamma: [a,b] \to \bD$.
  
  Now, let $z \in \mathbb{D}$, let $r = |z|$ and write $z = \lambda r$
  for $\lambda \in \mathbb{C}$ with $|\lambda|=1$.
  Let $\gamma_r: [0,r] \to \mathbb{D}, t \mapsto t$.
  Then $f \circ( \lambda \gamma_r)$ is a curve in $\mathbb{B}_d$ from $0$
  to $f(z)$, hence
  Lemma \ref{lem:metrics} (a), see also Remark \ref{ss:length}, and rotation invariance of the metric $\delta$ imply that
  \begin{equation*}
    \beta(0,f(z)) = \rho^*(0,f(z)) \le \ell_{\rho}(f \circ (\lambda \gamma_r))
    \le L \ell_{\delta} (\lambda \gamma_r)
    = L \ell_{\delta}(\gamma_r).
  \end{equation*}
  Estimating the right-hand side with the help of Lemma \ref{lem:Dirichlet_length}
  and recalling the definition of $\beta$, we find that
  \begin{equation*}
    \log \Big( \frac{1}{1 - \|f(z)\|} \Big) \le 2 \beta(0,f(z)) \le 2 L M \Big( \log \Big( \frac{1}{1-r}\Big) \Big)^{1/2}.
  \end{equation*}
  Rearranging this inequality gives the first statement with $C = 2 LM$.

  As for the second statement, let $\alpha > 0$ and choose $r_0 \in (0,1)$ so that
  $C \le \alpha ( \log(\frac{1}{1-r_0} ))^{1/2}$. If $|z| \ge r_0$, then the first estimate yields
  \begin{align*}
    \|f(z)\| \le 1 - \exp \Big( - C \Big( \log \Big( \frac{1}{1 - |z|} \Big) \Big)^{1/2} \Big)
    &\le 1- \exp \Big( - \alpha \log \Big( \frac{1}{1 - |z|} \Big) \Big) \\ &= 1 - (1 - |z|)^\alpha,
  \end{align*}
  which completes the proof.
\end{proof}

\subsection{Separated sets}

So far, we have only used the Lipschitz property of our embeddings from $\mathbb{D}$ into $\mathbb{B}_d$.
To make use of the fact that they are also bounded below, we will consider separated
sets in the two metric spaces. This is again similar to Rochberg's arguments in \cite{Rochberg16}
in the isometric setting, but some modifications are necessary.

Let $(X,d)$ be a metric space and let $\varepsilon > 0$.
We say that a subset $D \subset X$ is \emph{$\varepsilon$-separated}
if any two distinct points in $D$ have distance at least $\varepsilon$,
i.e.\ $d(x,y) \ge \varepsilon$ for all $x,y \in D$ with $x \neq y$.
Given a subset $S \subset X$ and $\varepsilon > 0$, we let
\begin{equation*}
  N(S,d,\varepsilon) = \sup \{ |D| : D \subset S \text { is } \varepsilon \text{-separated} \}.
\end{equation*}

These notions are useful for studying mappings that are bounded below because of the following
obvious lemma.

\begin{lemma}
  \label{lem:bounded_below}
  Let $(X,d_X)$ and $(Y,d_Y)$ be metric spaces and
  let $f: X \to Y$. Suppose that there exists a constant $m > 0$ such that
  $d_Y(f(x),f(y)) \ge m d_X(x,y)$ for all $x,y \in X$.
  Then, for any $S \subset X$, we have
  \begin{equation*}
    N(f(S),d_Y,m \varepsilon) \ge N(S,d_X,\varepsilon).
  \end{equation*}
\end{lemma}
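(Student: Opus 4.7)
The plan is a direct transport of separated sets through $f$, using only the assumed one-sided Lipschitz bound; the proof is essentially a line of bookkeeping, which is why the paper labels the lemma ``obvious.''

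First I would fix $S \subset X$ and let $D \subset S$ be any $\varepsilon$-separated subset with respect to $d_X$. For any two distinct points $x,y \in D$ the hypothesis yields
\[
d_Y(f(x),f(y)) \ge m\, d_X(x,y) \ge m \varepsilon > 0.
\]
Two things follow at once: $f$ restricted to $D$ is injective, so $|f(D)| = |D|$; and the image $f(D) \subset f(S)$ is $m\varepsilon$-separated with respect to $d_Y$.

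Therefore, by the very definition of $N(f(S),d_Y,m\varepsilon)$ as the supremum of cardinalities of $m\varepsilon$-separated subsets of $f(S)$, we obtain $N(f(S),d_Y,m\varepsilon) \ge |f(D)| = |D|$. Taking the supremum over all $\varepsilon$-separated $D \subset S$ then gives the stated inequality. There is no real obstacle; the content is simply that a lower Lipschitz bound preserves both distinctness and the separation constant up to the multiplicative factor $m$, and these are exactly the two ingredients that enter the definition of $N(\cdot,\cdot,\cdot)$.
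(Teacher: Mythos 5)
Your proof is correct and is essentially the paper's one-line argument: an $\varepsilon$-separated $D\subset S$ maps to an $m\varepsilon$-separated $f(D)\subset f(S)$ of the same cardinality, and one takes the supremum. Your explicit remark that injectivity on $D$ preserves cardinality is a small but welcome detail the paper leaves implicit.
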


\begin{proof}
  If $D \subset S$ is $\varepsilon$-separated, then $f(D) \subset f(S)$
  is $m \varepsilon$-separated.
\end{proof}

Let $C_r$ denote the circle of radius $r$ around $0$ in the complex plane.
Our next goal is to establish a lower bound for $N(C_r,\delta,\varepsilon)$, that is,
we wish to find large subsets of $C_r$ that are $\varepsilon$-separated in the
$\delta$-metric. The following lemma reduces the task of checking whether certain subsets
of $C_r$ are $\varepsilon$-separated to checking distances between adjacent points.

\begin{lemma}
  \label{lem:circle_increasing}
  Let $0 < r < 1$, let
  $0 \le \theta_1 < \theta_2 < \ldots < \theta_n \le \pi$ and let
  \begin{equation*}
    D = \{r e^{ i \theta_1} , \ldots, r e^{ i \theta_n}\}.
  \end{equation*}
  Then $D$ is $\varepsilon$-separated with respect to $\delta$
  if and only if $\delta(r e^{ i \theta_k}, r e^{i \theta_{k+1}}) \ge \varepsilon$
  for all $1 \le k \le n-1$.
\end{lemma}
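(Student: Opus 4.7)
The forward direction is immediate. My plan for the converse is to prove a single monotonicity statement and then read off the lemma using rotational invariance on the circle $C_r$. Concretely, $K(re^{i\alpha}, re^{i\beta})$ depends on $\alpha, \beta$ only through $r^2 e^{i(\alpha - \beta)}$, so $\delta(re^{i\alpha}, re^{i\beta})$ depends only on $\alpha - \beta$. Assuming that $\psi \mapsto \delta(r, re^{i\psi})$ is nondecreasing on $[0, \pi]$, then for any $j < k$ one has $\theta_k - \theta_j \ge \theta_{j+1} - \theta_j$ with both differences in $[0, \pi]$ (because $\theta_j, \theta_k \in [0, \pi]$), and therefore
\begin{equation*}
  \delta(re^{i\theta_j}, re^{i\theta_k}) = \delta(r, re^{i(\theta_k - \theta_j)}) \ge \delta(r, re^{i(\theta_{j+1} - \theta_j)}) = \delta(re^{i\theta_j}, re^{i\theta_{j+1}}) \ge \varepsilon,
\end{equation*}
which is the conclusion.

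The main task is therefore to establish this monotonicity. By formula \eqref{eqn:Dirichlet_metric}, it is equivalent to $F(\psi) := |\log(1 - r^2 e^{i\psi})|^2$ being nonincreasing on $[0, \pi]$. My plan is to start from the integral identity $\log(1 - r^2 e^{i\psi}) = -\int_0^{r^2} \frac{e^{i\psi}}{1 - s e^{i\psi}}\, ds$ and combine it with the conjugation $\overline{\log(1 - r^2 e^{i\psi})} = \log(1 - r^2 e^{-i\psi})$ to obtain
\begin{equation*}
  F(\psi) = \int_0^{r^2}\int_0^{r^2} \frac{ds\, dt}{(1 - s e^{i\psi})(1 - t e^{-i\psi})},
\end{equation*}
which is real by $s \leftrightarrow t$ symmetry, so $F(\psi)$ equals the integral of the real part of the integrand. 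A partial-fraction identity based on $\frac{1}{(1 - s\zeta)(1 - t \bar\zeta)} = \frac{1}{1 - st}\bigl(\frac{1}{1 - s\zeta} + \frac{t}{\zeta - t}\bigr)$ for $|\zeta| = 1$ then gives
\begin{equation*}
  \Re\frac{1}{(1 - s e^{i\psi})(1 - t e^{-i\psi})} = \frac{1}{1 - st}\Big[\frac{1 - s\cos\psi}{1 - 2s\cos\psi + s^2} + \frac{t\cos\psi - t^2}{1 - 2t\cos\psi + t^2}\Big],
\end{equation*}
and a short calculation shows that the two summands have $\psi$-derivatives $-\frac{(1 - s^2) s \sin\psi}{(1 - 2s\cos\psi + s^2)^2}$ and $-\frac{(1 - t^2) t \sin\psi}{(1 - 2t\cos\psi + t^2)^2}$, both $\le 0$ on $[0, \pi]$ for $s, t \in [0, 1)$. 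Hence the integrand is nonincreasing in $\psi$ for every $(s, t) \in [0, r^2]^2$, and therefore so is $F$.

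The hard part is proving the monotonicity of $F$. The decomposition $F(\psi) = (\log|1 - r^2 e^{i\psi}|)^2 + (\arg(1 - r^2 e^{i\psi}))^2$ does not split $F$ into monotone pieces: neither summand is monotone on $(0, \pi)$, since the first vanishes where $\cos\psi = r^2/2$ and the second vanishes at the endpoints. Similarly, the Fourier cosine expansion $F(\psi) = A_0 + 2\sum_{k \ge 1} A_k \cos(k\psi)$ has all $A_k \ge 0$, but positivity of Fourier cosine coefficients is not enough to force monotonicity (compare $\cos(2\psi)$). Routing through the double integral and the partial-fraction identity isolates the $\psi$-dependence in a form where monotonicity reduces to signing two elementary one-variable derivatives, which is essentially the only step requiring real work.
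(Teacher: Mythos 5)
Your proof is correct, and the overall skeleton matches the paper's: both reduce the lemma, via rotation invariance and the explicit formula \eqref{eqn:Dirichlet_metric}, to the single claim that $\psi \mapsto |\log(1 - s e^{i\psi})|^2$ is nonincreasing on $[0,\pi]$ for fixed $s \in (0,1)$. Where you diverge is in how that monotonicity is established. The paper differentiates directly in $\psi$, obtaining $f'(\psi) = \frac{2s}{|1-se^{i\psi}|^2}\Im\bigl(\log(1-se^{i\psi})(s-e^{-i\psi})\bigr)$, and then signs the imaginary part by a second differentiation in $s$: the auxiliary function $h(s) = \Im\bigl(\log(1-se^{i\psi})(s-e^{-i\psi})\bigr)$ vanishes at $s=0$ and has $h'(s) = \Im\log(1-se^{i\psi}) \le 0$. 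Your route instead writes $|\log(1-r^2e^{i\psi})|^2$ as a double integral over $[0,r^2]^2$ of $\frac{1}{(1-se^{i\psi})(1-te^{-i\psi})}$ and uses a partial-fraction split to exhibit the real part of the integrand as a positive multiple of a sum of two elementary expressions whose $\psi$-derivatives are manifestly $\le 0$ on $[0,\pi]$ (I checked the partial-fraction identity and both derivative formulas; they are right). The paper's argument is shorter once one guesses the right auxiliary function $h$; yours is more systematic in that it pushes the logarithm's nonconvex modulus-squared through an integral representation until the monotonicity becomes termwise obvious, at the cost of a two-variable integral and a partial-fraction computation. Your closing remarks correctly identify why the naive decompositions (real/imaginary parts of $\log$, or positivity of the Fourier cosine coefficients) do not suffice, which is exactly the obstruction that forces either your integral representation or the paper's double-differentiation trick.
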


\begin{proof}
  Necessity is clear. To prove sufficiency, by rotation invariance of the metric $\delta$,
  it suffices to show that
  \begin{equation*}
    [0,\pi] \to [0,\infty), \quad t \mapsto \delta(r e^{ i t}, r) ,
  \end{equation*}
  is increasing.
  From \eqref{eqn:Dirichlet_metric}, we see that
  \begin{equation*}
    1 - \delta (r e^{ i t}, r)^2 = \frac{|\log(1 -r^2 e^{i t})|^2}{\log(1 - r^2)^2},
  \end{equation*}
  so
  it even suffices to show that for each $s \in (0,1)$, the function
  \begin{equation*}
    f: [0, \pi] \to [0,\infty), \quad t \mapsto | \log(1 - s e^{i t})|^2,
  \end{equation*}
  is decreasing.
  
  Writing $f(t) = \log(1 - s e^{i t}) \log(1 - s e^{- i t})$,
  a straightforward calculation shows that
  \begin{equation*}
    f'(t) = \frac{2 s}{|1 -s e^{i t}|^2} \Im ( \log (1-s e^{ it}) (s - e^{ - it})).
  \end{equation*}
  We finish the proof by showing that for each $t \in [0, \pi]$, the function
  \begin{equation*}
    h: [0,1) \to \mathbb{R}, \quad
    s \mapsto \Im ( \log(1 - s e^{i t}) (s - e^{ - it }))
  \end{equation*}
  is bounded above by $0$.
  To this end, notice that $h(0) = 0$,
  and by another small computation,
  \begin{equation*}
    h'(s) = \Im ( \log (1 - s e^{ it})) \le 0
  \end{equation*}
  for all $t \in [0,\pi]$ and all $s \in [0,1)$,
  because $1- s e^{ it}$ belongs to the closed lower half plane for these
  values of $s$ and $t$.
  Hence, $h$ is decreasing in $s$, so $h(s) \le 0$
  for all $s \in [0,1)$, as asserted.
\end{proof}

The following lemma will allow us to place sufficiently many $\varepsilon$-separated points on $C_r$ as $r$ approaches $1$.

\begin{lemma}
  \label{lem:circle_asymptotic}
  For $0 \le r < 1$, let $\theta(r) = \sqrt{1 - r}$. Then
  \begin{equation*}
    \lim_{r \to 1} \delta (r e^{i \theta(r)}, r) = \sqrt{\frac{3}{4}}.
  \end{equation*}
\end{lemma}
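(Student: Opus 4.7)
The plan is to plug directly into the explicit formula \eqref{eqn:Dirichlet_metric} and carry out a careful asymptotic analysis. With $z = re^{i\theta(r)}$ and $w = r$, we have $\overline{z}w = r^2 e^{-i\theta(r)}$ and $|z|^2 = |w|^2 = r^2$, so
\begin{equation*}
  \delta(re^{i\theta(r)}, r)^2 = 1 - \frac{|\log(1 - r^2 e^{-i\theta(r)})|^2}{\bigl(\log(1-r^2)\bigr)^2}.
\end{equation*}
Hence it suffices to show that the ratio on the right tends to $1/4$ as $r \to 1$.

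To this end I would set $\eta = \theta(r) = \sqrt{1-r}$, so $r = 1-\eta^2$ and $\eta \to 0^+$ as $r \to 1$. A short Taylor expansion gives
\begin{equation*}
  1 - r^2 \cos\eta = (1-r^2) + r^2(1-\cos\eta) = \tfrac{5}{2}\eta^2 + O(\eta^4), \qquad r^2 \sin\eta = \eta + O(\eta^3),
\end{equation*}
so the imaginary part of $1 - r^2 e^{-i\eta}$ dominates its real part, and $|1 - r^2 e^{-i\eta}| = \eta(1 + O(\eta^2))$ as $\eta \to 0$. In particular,
\begin{equation*}
  \log|1 - r^2 e^{-i\eta}| = \log \eta + O(\eta^2) = \tfrac{1}{2}\log(1-r) + O(1-r).
\end{equation*}
Meanwhile the argument $\arg(1 - r^2 e^{-i\eta})$ stays bounded (in fact it tends to $\pi/2$), so it contributes an $O(1)$ term to $|\log(1-r^2 e^{-i\eta})|^2$, which is negligible compared to $(\log \eta)^2 \to \infty$. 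Therefore
\begin{equation*}
  |\log(1 - r^2 e^{-i\eta})|^2 = (\log \eta)^2 + O(1) \sim \tfrac{1}{4}\bigl(\log(1-r)\bigr)^2.
\end{equation*}

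For the denominator, $\log(1-r^2) = \log(1-r) + \log(1+r) \sim \log(1-r)$, so $(\log(1-r^2))^2 \sim (\log(1-r))^2$. Dividing numerator by denominator yields
\begin{equation*}
  \frac{|\log(1 - r^2 e^{-i\theta(r)})|^2}{\bigl(\log(1-r^2)\bigr)^2} \longrightarrow \frac{1}{4},
\end{equation*}
and consequently $\delta(re^{i\theta(r)}, r)^2 \to 1 - 1/4 = 3/4$, which gives the claim. The only real care needed is to confirm that $|1 - r^2 e^{-i\eta}| \sim \eta$ (that is, that the imaginary part $r^2 \sin\eta \sim \eta$ genuinely dominates the real part $\sim \tfrac{5}{2}\eta^2$) so that the squared logarithm has leading term $(\log\eta)^2$; everything else is routine bookkeeping with elementary Taylor expansions.
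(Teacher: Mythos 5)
Your proof is correct and follows essentially the same route as the paper: both reduce to showing $|\log(1-r^2e^{\mp i\theta(r)})|^2/(\log(1-r^2))^2 \to 1/4$ by discarding the bounded argument of the logarithm and then verifying via Taylor expansion that $|1-r^2e^{\mp i\theta(r)}|\sim\sqrt{1-r}$. The only cosmetic difference is that the paper packages the key expansion as differentiability of $h(r)=1-2\cos(\sqrt{1-r})\,r^2+r^4$ at $r=1$, whereas you substitute $\eta=\sqrt{1-r}$ and expand directly; the computations are identical.
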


\begin{proof}
  Note that
  \begin{equation*}
    1 - \delta (r e^{i \theta(r)},r)^2 = \frac{ |\log(1 - r^2 e^{i \theta(r)})|^2}{\log(1-r^2)^2},
  \end{equation*}
  so we have to show that
  \begin{equation}
    \label{eqn:circle_to_show}
    \lim_{r \to 1} \frac{|\log(1 - r^2 e^{ i \theta(r)})|}{- \log(1-r^2)} = \frac{1}{2}.
  \end{equation}
  Since the imaginary part of $\log$ is bounded, we see that
  \begin{align}
    \label{eqn:circle_inter}
    \lim_{r \to 1} \frac{|\log(1 - r^2 e^{ i \theta(r)})|}{- \log(1-r^2)}
    = \lim_{r \to 1} \frac{| \Re \log(1 - r^2 e^{ i \theta(r)})|}{- \log(1-r)}
    = \lim_{r \to 1} \frac{\log|1 - r^2 e^{ i \theta(r)}|^2}{2 \log(1-r)}.
  \end{align}
  The Taylor series expansion of cosine shows that
  \begin{equation*}
    r \mapsto |1 - r^2 e^{i \theta(r)}|^2 = 1 - 2 \cos(\sqrt{1-r}) r^2 + r^4
  \end{equation*}
  extends to a function $h$ that is differentiable at $r=1$ and satisfies $h(1) = 0$
  and $h'(1) = -1$.
  Thus, $|1  - r^2 e^{i \theta(r)}|^2/(1-r)$ converges to $1$ as $r \to 1$,
  and so
  \begin{equation*}
    \lim_{r \to 1} \frac{\log|1 - r^2 e^{ i \theta(r)}|^2}{2 \log(1-r)}=
    \lim_{r \to 1} \frac{\log \frac{|1 - r^2 e^{ i \theta(r)}|^2}{1 - r} + \log(1-r)}{2 \log(1-r)} = \frac{1}{2}.
  \end{equation*}
  In combination with \eqref{eqn:circle_inter}, this proves \eqref{eqn:circle_to_show}.
\end{proof}

We are now able to establish the desired lower bound for $N(C_r,\delta,\varepsilon)$.
\begin{lemma}
  \label{lem:points_on_circle}
  Let $C_r = \{z \in \mathbb{C}: |z|=r\}$. If $\varepsilon < \sqrt{3/4}$, then
  there exists $r_0 < 1$ such that
  \begin{equation*}
    N(C_r,\delta,\varepsilon) \ge \frac{1}{\sqrt{1-r}}
  \end{equation*}
  for all $r_0 < r < 1$.
\end{lemma}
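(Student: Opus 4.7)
The plan is to place equally spaced points on $C_r$ whose angular spacing is exactly $\theta(r) = \sqrt{1-r}$ and invoke the two preceding lemmas to check separation and count.

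More precisely, I would fix $\varepsilon < \sqrt{3/4}$ and first apply Lemma \ref{lem:circle_asymptotic}: since $\delta(r e^{i\theta(r)}, r) \to \sqrt{3/4}$ as $r \to 1$, there is some $r_1 < 1$ such that $\delta(r e^{i\theta(r)}, r) \ge \varepsilon$ whenever $r_1 \le r < 1$. For such $r$, set $N = \lfloor \pi / \theta(r) \rfloor$ and consider the candidate set
\begin{equation*}
  D_r = \{ r e^{i k \theta(r)} : k = 0, 1, \ldots, N \} \subset C_r,
\end{equation*}
whose angles lie in $[0, \pi]$ by choice of $N$.

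To verify that $D_r$ is $\varepsilon$-separated, I would use the rotation invariance of $\delta$ (evident from the formula \eqref{eqn:Dirichlet_metric}) to obtain $\delta(r e^{i k \theta(r)}, r e^{i(k+1)\theta(r)}) = \delta(r e^{i\theta(r)}, r) \ge \varepsilon$ for each consecutive pair, and then invoke Lemma \ref{lem:circle_increasing} to conclude that all pairwise distances in $D_r$ are at least $\varepsilon$.

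Finally, the cardinality is
\begin{equation*}
  |D_r| = N + 1 \ge \frac{\pi}{\theta(r)} = \frac{\pi}{\sqrt{1-r}} \ge \frac{1}{\sqrt{1-r}},
\end{equation*}
so taking $r_0 = r_1$ (or any larger value if needed to absorb the floor) yields the claim. There is no real obstacle here: the two preceding lemmas do all the work, and the only care required is keeping the angles inside $[0,\pi]$ so that Lemma \ref{lem:circle_increasing} applies; the wasteful factor of $\pi$ in the count is more than enough to dominate the lost $+1$ from the floor function.
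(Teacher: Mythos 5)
Your proposal is correct and follows essentially the same argument as the paper: the same equally spaced points with angular gap $\sqrt{1-r}$, the same use of Lemma \ref{lem:circle_asymptotic} to get separation of adjacent points for $r$ near $1$, Lemma \ref{lem:circle_increasing} plus rotation invariance to upgrade this to full separation, and the same floor-function count $N+1 \ge \pi/\sqrt{1-r} \ge 1/\sqrt{1-r}$.
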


\begin{proof}
  Let $\theta(r) = \sqrt{1 - r}$, let $N(r) = \lfloor \frac{\pi}{\sqrt{1-r}} \rfloor$
  and let
  \begin{equation*}
    D(r) = \{ r, r e^{ i \theta(r)}, \ldots, r e^{ i N(r) \theta(r)} \}.
  \end{equation*}
  Observe that $D(r) \subset C_r$ and $|D(r)| \ge \frac{\pi}{\sqrt{1 - r}}$.

  By Lemma \ref{lem:circle_asymptotic}, there exists $r_0 <1$ such that
  \begin{equation*}
    \delta(r e^{ i \theta(r)}, r) \ge \varepsilon \quad \text{ for all } r \in [r_0,1).
  \end{equation*}
  Rotational invariance of $\delta$ and Lemma \ref{lem:circle_increasing}
  now show that $D(r)$ is $\varepsilon$-separated for $r \ge r_0$.
  Therefore,
  \begin{equation*}
    N(C_r,\delta,\varepsilon) \ge |D(r)| \ge \frac{1}{\sqrt{1-r}}. \qedhere
  \end{equation*}
\end{proof}

Finally, we require the following upper bound for the number of points in a ball
that are separated with respect to the pseudohyperbolic metric. The result is due to Duren and Weir;
see \cite[Lemma 5]{DW07}.

\begin{lemma}
  \label{lem:points_in_ball}
  Let $d \in \mathbb{N}$, let $0 < r < 1$ and let $B_r = \{z \in \mathbb{C}^d: \|z\| \le r\}$.
  Then
  \begin{equation*}
    N(B_r,\rho,\varepsilon) \le \Big(\frac{2}{\varepsilon} + 1 \Big)^{2 d} \frac{1}{(1-r^2)^d}
  \end{equation*}
  for all $\varepsilon > 0$. \qed
\end{lemma}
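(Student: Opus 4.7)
My plan is to run a volume comparison argument using the M\"obius-invariant measure on $\mathbb{B}_d$. Let $D \subset B_r$ be $\varepsilon$-separated for $\rho$ with $|D| = N$, and for each $a \in D$ set $\Delta_a = \{z \in \mathbb{B}_d : \rho(z, a) < \varepsilon/2\}$; since $\rho$ is a genuine metric, these balls are pairwise disjoint. Because $\rho(z,a) = \|\varphi_a(z)\|$, each $\Delta_a = \varphi_a\bigl((\varepsilon/2)\mathbb{B}_d\bigr)$ is the image of a fixed Euclidean ball under a biholomorphic automorphism.

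The key idea is to pass from Lebesgue measure to the invariant measure $d\lambda(z) = (1-\|z\|^2)^{-(d+1)} dV(z)$, which is preserved by every $\varphi_a$. This immediately yields $\lambda(\Delta_a) = \lambda\bigl((\varepsilon/2)\mathbb{B}_d\bigr) =: c_d(\varepsilon)$ independently of $a$, and the crude lower bound $c_d(\varepsilon) \ge V\bigl((\varepsilon/2)\mathbb{B}_d\bigr) = (\varepsilon/2)^{2d}\,V(\mathbb{B}_d)$ will be enough.

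Next I would localize the $\Delta_a$ in a common Euclidean ball $B_{r'}$. For this I use the M\"obius triangle inequality
\begin{equation*}
\rho(0,z) \le \frac{\rho(0,a) + \rho(a,z)}{1 + \rho(0,a)\rho(a,z)},
\end{equation*}
which follows from the ordinary triangle inequality for the Poincar\'e--Bergman metric $\beta = \tanh^{-1}\rho$ together with the addition formula for $\tanh$ and the monotonicity of $\tanh$. Taking $\rho(0,a) = \|a\| \le r$ and $\rho(a,z) < \varepsilon/2$ forces $\|z\| < r' := (r + \varepsilon/2)/(1 + r\varepsilon/2)$, and an elementary computation gives $1 - r'^2 = (1-r^2)(1 - \varepsilon^2/4)/(1 + r\varepsilon/2)^2$.

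Finally, by disjointness $N \cdot c_d(\varepsilon) \le \lambda(B_{r'})$, and computing the right-hand side in polar coordinates (the dominant contribution coming from the factor $(1-r'^2)^{-d}$) produces a bound $N \le C_d\, \varepsilon^{-2d}(1-r^2)^{-d}$. The remaining work is tracking constants to massage this into the claimed form $(2/\varepsilon + 1)^{2d}(1-r^2)^{-d}$, which I expect to be the main technical nuisance rather than a true obstacle. The structural point is that the naive substitute of Euclidean volume in place of $\lambda$ would only yield $(1-r^2)^{-(d+1)}$; passing to the invariant measure is precisely what delivers the correct exponent $d$.
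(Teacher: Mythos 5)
The paper does not actually prove this lemma; it is quoted from Duren and Weir \cite{DW07}, so there is no internal proof to compare against. Your invariant-measure packing argument is a correct, self-contained proof, and the constant-tracking you deferred does work out --- indeed it lands exactly on (in fact slightly below) the stated bound. Two points to close it up. First, do not settle for the crude bound $\lambda\bigl((\varepsilon/2)\mathbb{B}_d\bigr) \ge V\bigl((\varepsilon/2)\mathbb{B}_d\bigr)$: that costs an extra factor $(1-\varepsilon^2/4)^{-d}$ which, together with the $(2/\varepsilon+r)^{2d}$ coming from the numerator, overshoots $(2/\varepsilon+1)^{2d}$ as $r \to 1$. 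Instead use the exact value, which follows from the antiderivative $\frac{d}{du}\, u^d(1-u)^{-d} = d\, u^{d-1}(1-u)^{-(d+1)}$ after integrating in polar coordinates:
\begin{equation*}
  \lambda(s\mathbb{B}_d) = V(\mathbb{B}_d)\, \frac{s^{2d}}{(1-s^2)^d} \quad (0 \le s < 1).
\end{equation*}
Combining this with your identity $1-r'^2 = (1-r^2)(1-\varepsilon^2/4)(1+r\varepsilon/2)^{-2}$ and $r' \le 1$ gives
\begin{equation*}
  N \le \frac{\lambda(B_{r'})}{\lambda\bigl((\varepsilon/2)\mathbb{B}_d\bigr)}
  \le \frac{(1+r\varepsilon/2)^{2d}}{(\varepsilon/2)^{2d}(1-r^2)^d}
  = \frac{(2/\varepsilon + r)^{2d}}{(1-r^2)^d}
  \le \Big(\frac{2}{\varepsilon}+1\Big)^{2d} \frac{1}{(1-r^2)^d},
\end{equation*}
the $(1-\varepsilon^2/4)^d$ factors cancelling exactly. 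Second, the step $\|z\| \le r'$ uses the monotonicity of $(u,v) \mapsto (u+v)/(1+uv)$ on $[0,1)^2$, which is elementary but should be recorded; and the regime $\varepsilon \ge 1$, where $r'$ need not lie in $[0,1)$, is trivial anyway since $\rho < 1$ forces $N \le 1$ there. Your structural remark is also correct: replacing $\lambda$ by Euclidean volume yields only the exponent $d+1$ (or worse), and the M\"obius invariance of $\lambda$ is precisely what produces the exponent $d$.
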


\subsection{Proof of main result}

We are now in position to prove the main result.
In light of Corollary \ref{cor:bi_Lipschitz},
the following result implies Theorem \ref{T:main}.

\begin{theorem}
  \label{thm:no_embedding}
  For any finite $d$ there does not exist a bi-Lipschitz map from $(\bD,\delta_{\mathcal{D}})$ into $(\bB_d, \delta_{H^2_d})$.
\end{theorem}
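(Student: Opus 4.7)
The plan is to combine Proposition \ref{prop:Lipschitz_slow_growth} with Lemmas \ref{lem:points_on_circle} and \ref{lem:points_in_ball} via a counting argument in the spirit of Rochberg.

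Suppose for contradiction that $f \colon (\bD, \delta) \to (\bB_d, \rho)$ is bi-Lipschitz with lower Lipschitz constant $m > 0$. Since the M\"obius automorphism $\varphi_{f(0)}$ of $\bB_d$ is a $\rho$-isometry (immediate from the identity $1 - \|\varphi_w(z)\|^2 = (1-\|w\|^2)(1-\|z\|^2)/|1 - \langle z,w\rangle|^2$ recalled in Section \ref{sec:prelim}), the map $\varphi_{f(0)} \circ f$ remains bi-Lipschitz with the same constants and sends $0$ to $0$. Thus I may assume $f(0) = 0$.

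Fix any $\alpha \in (0, 1/(2d))$ and apply Proposition \ref{prop:Lipschitz_slow_growth} to produce $r_0 \in (0,1)$ with
\begin{equation*}
  \|f(z)\| \le R(r) := 1 - (1-r)^\alpha \quad \text{whenever } |z| = r \ge r_0,
\end{equation*}
so $f(C_r) \subset B_{R(r)}$ for every such $r$. Now fix some $\varepsilon \in (0, \sqrt{3/4})$. By Lemma \ref{lem:points_on_circle}, once $r$ is sufficiently close to $1$, the circle $C_r$ carries an $\varepsilon$-separated subset of cardinality at least $(1-r)^{-1/2}$; by Lemma \ref{lem:bounded_below}, its image under $f$ is an $m\varepsilon$-separated subset of $B_{R(r)}$ of the same size. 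Lemma \ref{lem:points_in_ball} therefore forces
\begin{equation*}
  (1-r)^{-1/2} \le \Bigl(\tfrac{2}{m\varepsilon} + 1\Bigr)^{2d} \frac{1}{(1 - R(r)^2)^d}.
\end{equation*}
A direct computation gives $1 - R(r)^2 = 2(1-r)^\alpha - (1-r)^{2\alpha} \sim 2(1-r)^\alpha$ as $r \to 1$, so the right-hand side is $O\bigl((1-r)^{-\alpha d}\bigr)$, and the inequality rearranges to $(1-r)^{\alpha d - 1/2} = O(1)$ as $r \to 1$. Since $\alpha d - 1/2 < 0$, the left-hand side blows up, a contradiction.

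I expect the main difficulty to be calibrating the exponents so that the packing on $C_r$ really outpaces the packing allowed in the ball $B_{R(r)}$ whose image $f(C_r)$ is confined to. The $1/\sqrt{1-r}$ growth from Lemma \ref{lem:points_on_circle} must beat $(1-r)^{-\alpha d}$, which is possible precisely because Proposition \ref{prop:Lipschitz_slow_growth} lets us take $\alpha$ as small as we please once $d$ is fixed and finite. This freedom in $\alpha$ is the crux of the argument and also illustrates why the same strategy gives no information when $d = \infty$.
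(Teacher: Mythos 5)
Your proposal is correct and follows essentially the same route as the paper's proof: normalize $f(0)=0$ by a M\"obius automorphism, confine $f(C_r)$ to a small ball via Proposition \ref{prop:Lipschitz_slow_growth}, and compare the separated-set counts from Lemmas \ref{lem:points_on_circle}, \ref{lem:bounded_below} and \ref{lem:points_in_ball}. The only cosmetic difference is the choice of exponent (the paper fixes $\alpha = 1/(2d+1)$ rather than an arbitrary $\alpha < 1/(2d)$), which changes nothing of substance.
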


\begin{proof}
  Suppose towards a contradiction that there exists a bi-Lipschitz map $f: \bD \to \bB_d$ for some $d< \infty$.
  Since biholomorphic automorphisms of $\bB_d$ are isometries with respect to the pseudohyperbolic metric,
  we may assume without loss of generality that $f(0)= 0$.

  As before, let $C_r = \{z \in \mathbb{C}: |z| = r\}$ and
  $B_s = \{z \in \mathbb{C}^d : \|z\| \le s\}$.
  Applying Proposition \ref{prop:Lipschitz_slow_growth}
  with $\alpha = 1/ (2 d + 1)$
  we find $r_0 < 1$ such that
  $f(C_r)$ is contained in $B_{s(r)}$ for all $r_0 < r < 1$, where
  \begin{equation*}
    s(r) =  1 - (1 - r)^{1/(2 d+1)}.
  \end{equation*}
  Lemma \ref{lem:points_on_circle} shows that by increasing $r_0$ if necessary, we may find $\varepsilon > 0$ such that
  \begin{equation*}
    N(C_r,\delta,\varepsilon) \ge \frac{1}{\sqrt{1 -r }} \quad \text{ for all } r_0 < r < 1.
  \end{equation*}
  Assuming that $f$ is bounded below by $m$, Lemma \ref{lem:bounded_below} then implies that
  \begin{equation}
    \label{eqn:main_1}
    \frac{1}{\sqrt{1-r}}
    \le N(B_{s(r)},\delta_{H^2_d},m \varepsilon) \quad \text{ for all } r_0 < r < 1.
  \end{equation}
  On the other hand, Lemma \ref{lem:points_in_ball} shows that
  \begin{equation}
    \label{eqn:main_2}
    N(B_{s(r)},\delta_{H^2_d},m \varepsilon) \le
    \Big(\frac{2}{m \varepsilon} + 1 \Big)^{2 d}
      \frac{1}{(1-s(r)^2)^d} \le C \frac{1}{(1-r)^{d / (2 d+1)}}
  \end{equation}
  for all $0 < r < 1$ and some constant $C < \infty$ that does not depend on $r$.
  Combining \eqref{eqn:main_1} and \eqref{eqn:main_2}, we arrive at a contradiction.
  Hence, there does not exist a bi-Lipschitz map from $(\mathbb{D},\delta)$ into $(\mathbb{B}_d,\rho)$
  for $d < \infty$.
\end{proof}

\subsection{Weighted Dirichlet spaces}
For $a \in (0,1)$, let $\mathcal{D}_a$ be the reproducing kernel Hilbert space on $\mathbb{D}$
with kernel
\begin{equation*}
  k_a(z,w) = \frac{1}{(1 - z \overline{w})^a}.
\end{equation*}
These spaces are weighted Dirichlet spaces.
It is well known that they are also complete Pick spaces, which follows from the fact
that the power
series coefficients of $1-1/k_a$ are non-negative \cite[p.~22]{MS94a}.
It is natural to ask if Theorem \ref{T:main} can be extended to these spaces.

\begin{question}
  \label{quest:D_a}
  Let $a \in (0,1)$. Do there exist $d \in\mathbb{N}$ and a map $b: \mathbb{D} \to \mathbb{B}_d$
  that induces a surjective homomorphism
  \begin{equation*}
    \Mult(H^2_d) \to \Mult(\mathcal{D}_a), \quad \varphi \mapsto \varphi \circ b?
  \end{equation*}
\end{question}

It was shown in \cite[Corollary 11.9]{Hartz17a} that for any $a \in (0,1)$, there does
not exist a map $b: \mathbb{D} \to \mathbb{B}_d$ with $d< \infty$ such that
\begin{equation*}
  k_a(z,w) = \frac{1}{1 - \langle b(z),b(w) \rangle }.
\end{equation*}

The proof of Theorem \ref{T:main} given here does not generalize to $\mathcal{D}_a$.
In fact, Theorem \ref{thm:no_embedding} is not true with $\mathcal{D}_a$
in place of $\mathcal{D}$, as the metric $\delta_{\mathcal{D}_a}$ is equivalent
to the pseudo-hyperbolic metric $\rho$ on $\mathbb{D}$.
Indeed, from Equation \eqref{eqn:metric}, it follows that
\begin{equation*}
  a^{1/2} \rho \le \delta_{\mathcal{D}_a} = \Big( 1 - ( 1 - \rho^2)^a \Big)^{1/2} \le \rho.
\end{equation*}
Thus, the identity mapping $(\mathbb{D}, \delta_{\mathcal{D}_a}) \to (\mathbb{D}, \rho)$ is bi-Lipschitz.
Nonetheless, the multiplier algebras of $H^2$ and of $\mathcal{D}_a$ do not coincide.

It therefore appears that different arguments are needed to answer Question \ref{quest:D_a}.

\section{Other notions of embedding}
\label{sec:other}

The following result shows that Theorem \ref{T:main} also rules out
the existence of other types of embedding for the Dirichlet space.
We also relate Theorem \ref{T:main} to the point of view taken for instance in \cite{DHS15} and \cite{SS14}.
To this end, let $V \subset \mathbb{B}_d$ and define $H^2_d \big|_V$ to be the reproducing
kernel Hilbert space on $V$ whose reproducing kernel is the restriction of the Drury--Arveson kernel to $V \times V$.
Equivalently,
\begin{equation*}
  H^2_d \big|_V = \{ f \big|_V: f \in H^2_d \},
\end{equation*}
equipped with the quotient norm.

\begin{proposition}
  \label{prop:other_emb}
  Let $\mathcal{H}$ be a reproducing kernel Hilbert space
  on a set $X$ separating the points of $X$ with reproducing kernel $k$.
  Let $d \in \mathbb{N} \cup \{\infty\}$ and let $b: X \to \mathbb{B}_d$ and
  $\delta: X \to \mathbb{C} \setminus \{0\}$ be mappings.
  Let $V= b(X)$.

  Among the following statements, the implications
  \begin{center}
  \normalfont{(i)} $\Leftrightarrow$
  \normalfont{(i')} $\Leftrightarrow$
  \normalfont{(i'')} $\Rightarrow$
  \normalfont{(ii)} $\Leftrightarrow$
  \normalfont{(ii')} $\Rightarrow$
  \normalfont{(iii)} $\Leftrightarrow$
  \normalfont{(iii')}
  \end{center}
  hold.

  \begin{enumerate}
    \item[\normalfont{(i)}] $k(z,w) = \frac{\delta(z) \overline{\delta(w)}}{1 - \langle b(z), b(w) \rangle}$
      for all $z, w \in X$;
    \item[\normalfont{(i')}] the assignment
      \begin{equation*}
        H^2_d \to \mathcal{H}, \quad f \mapsto \delta \cdot (f \circ b),
      \end{equation*}
      defines a co-isometry;
    \item[\normalfont{(i'')}] the assignment
      \begin{equation*}
        H^2_d \big|_V \to \mathcal{H}, \quad f \mapsto \delta \cdot (f \circ b),
      \end{equation*}
      defines a unitary;
    \item[\normalfont{(ii)}] the assignment
      \begin{equation*}
        H^2_d  \to \mathcal{H}, \quad f \mapsto \delta \cdot (f \circ b),
      \end{equation*}
      defines a surjection;
    \item[\normalfont{(ii')}] the assignment
      \begin{equation*}
        H^2_d \big|_V \to \mathcal{H}, \quad f \mapsto \delta \cdot (f \circ b),
      \end{equation*}
      defines a bijection;
    \item[\normalfont{(iii)}] the assignment
      \begin{equation*}
        \Mult(H^2_d)  \to \Mult(\mathcal{H}), \quad \varphi \mapsto \varphi \circ b,
      \end{equation*}
      defines a surjection;
    \item[\normalfont{(iii')}] the assignment
      \begin{equation*}
        \Mult(H^2_d \big|_V)  \to \Mult(\mathcal{H}), \quad \varphi \mapsto \varphi \circ b,
      \end{equation*}
      defines a bijection.
  \end{enumerate}
\end{proposition}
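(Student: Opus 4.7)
The unifying object of the approach will be the linear map $T \colon H^2_d \to \mathcal{H}$ defined by $Tf = \delta \cdot (f \circ b)$, together with its canonical factorization $T = T_V \circ R$, where $R \colon H^2_d \to H^2_d\big|_V$ is the restriction and $T_V \colon H^2_d\big|_V \to \mathcal{H}$, $f \mapsto \delta \cdot (f \circ b)$, is the induced map. By the construction of $H^2_d\big|_V$ (equivalently, the quotient of $H^2_d$ by the subspace of functions vanishing on $V$), the map $R$ is a coisometry. Since $\delta$ is nowhere zero and $V = b(X)$, the map $T_V$ is automatically injective. These two facts will drive the first two clusters of equivalences.

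For (i) $\Leftrightarrow$ (i') $\Leftrightarrow$ (i''), I will test $T$ against reproducing kernels: a direct computation gives $T^{\ast} k^{\mathcal{H}}_w = \overline{\delta(w)}\, K^{H^2_d}_{b(w)}$, so the equation $TT^{\ast} = I_{\mathcal{H}}$ unpacks as
\begin{equation*}
k(z,w) = \langle T^{\ast} k^{\mathcal{H}}_w, T^{\ast} k^{\mathcal{H}}_z \rangle = \frac{\delta(z)\overline{\delta(w)}}{1 - \langle b(z),b(w)\rangle},
\end{equation*}
which is exactly (i), yielding (i) $\Leftrightarrow$ (i'). Because $R$ is a coisometry and $T_V$ is injective, the factorization $T = T_V R$ then shows that $T$ is coisometric iff $T_V$ is coisometric iff $T_V$ is unitary, giving (i') $\Leftrightarrow$ (i''). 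The analogous equivalence (ii) $\Leftrightarrow$ (ii') follows from the same factorization applied to surjectivity (using that $R$ is surjective and $T_V$ is injective), and (i') $\Rightarrow$ (ii) is immediate.

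The crux of the proposition is (ii) $\Rightarrow$ (iii). The nontrivial ingredient I will invoke is the Ball--Trent--Vinnikov extension theorem in the Drury--Arveson space: for every subset $V \subset \mathbb{B}_d$, the restriction map $\Mult(H^2_d) \to \Mult(H^2_d\big|_V)$ is surjective. Starting from (ii), the bijection $T_V$ of (ii') will first force $b$ to be injective: if $b(z) = b(z')$ with $z \neq z'$, every element in the range of $T_V$ would obey the rigid ratio $\delta(z')h(z) = \delta(z)h(z')$, contradicting separation of points by $\mathcal{H}$. Given $\psi \in \Mult(\mathcal{H})$, the bounded operator $T_V^{-1} M_\psi T_V$ on $H^2_d\big|_V$ will then be identified by a short calculation with multiplication by $\widetilde{\psi} \in \Mult(H^2_d\big|_V)$ defined by $\widetilde{\psi}(b(z)) = \psi(z)$. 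The extension theorem delivers $\varphi \in \Mult(H^2_d)$ with $\varphi\big|_V = \widetilde{\psi}$, and hence $\varphi \circ b = \psi$.

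Finally, (iii) $\Leftrightarrow$ (iii') uses the same extension theorem: the map in (iii) factors as the surjective restriction $\Mult(H^2_d) \to \Mult(H^2_d\big|_V)$ followed by the map in (iii'), so the two maps are simultaneously surjective; injectivity of (iii') is immediate from $V = b(X)$. The main obstacle throughout is the implication (ii) $\Rightarrow$ (iii), which converts a bare Hilbert-space surjection into a surjection of multiplier algebras and rests entirely on the Ball--Trent--Vinnikov extension theorem; the remaining implications are essentially bookkeeping on the factorization $T = T_V R$.
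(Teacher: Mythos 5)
Your proposal is correct and follows essentially the same route as the paper: kernel computations for the (i)-cluster, the factorization through the restriction (co-isometry) $H^2_d \to H^2_d\big|_V$ for the primed equivalences, conjugation of multiplication operators by the bijection $T_V$ (whose boundedness, which you use implicitly, comes from the closed graph theorem), and the complete Pick extension property of $H^2_d$ for the (iii)-cluster. The only cosmetic difference is that you route (ii) $\Rightarrow$ (iii) directly and handle (iii') afterwards, while the paper proves (ii') $\Rightarrow$ (iii') first; the content is the same.
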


\begin{proof}
  The proof consists of routine arguments with reproducing kernels. We sketch the main ideas.

  (i) $\Rightarrow$ (i') Let $K(z,w) = \frac{1}{1 - \langle z,w \rangle }$ be the reproducing kernel for $H^2_d$.
  Condition (i) implies that
  \begin{equation*}
    \langle k(\cdot,w), k(\cdot,z) \rangle_{\mathcal{H}} = \langle \overline{\delta(w)} K(\cdot,b(w)), \overline{\delta(z)} K(\cdot,b(z)) \rangle_{H^2_d} \quad (z,w \in X),
  \end{equation*}
  from which it follows that there exists an isometry $V: \mathcal{H} \to H^2_d$ with
  \begin{equation}
    \label{eqn:iso}
    V k(\cdot,w) = \overline{\delta(w)} K(\cdot,b(w)) \quad (w \in X).
  \end{equation}
  The adjoint $V^*: H^2_d \to \mathcal{H}$ is the map in the statement of (i').

  (i') $\Rightarrow$ (i)
  If $T$ denotes the co-isometry in the statement of (i'), then $V = T^*$
  is an isometry satisfying \eqref{eqn:iso}, from which (i) follows.

  (i') $\Leftrightarrow$ (i'') The restriction map $H^2_d \to H^2_d \big|_V$ is a co-isometry
  whose kernel is $I(V)$, the space of all functions in $H^2_d$ vanishing on $V$.
  So if the map in (i'') is unitary, then the map in (i') is a co-isometry.
  Conversely, if the map in (i') is a co-isometry, then its kernel is $I(V)$, so the map in (i'')
  is unitary.

  (i') $\Rightarrow$ (ii) is trivial.

  (ii) $\Leftrightarrow$ (ii') follows in the same way as (i') $\Leftrightarrow$ (ii').

  (ii') $\Rightarrow$ (iii') Let
  \begin{equation*}
    T: H^2_d \big|_V \to \mathcal{H}, \quad f \mapsto \delta \cdot (f \circ b),
  \end{equation*}
  be the map in (ii'). By the closed graph theorem and the open mapping theorem, $T$ is bounded
  and has a bounded inverse. Moreover, since $\mathcal{H}$ separates the points of $X$,
  surjectivity of $T$ shows that $b$ is injective, hence the inverse of $T$ is given by
  \begin{equation*}
    T^{-1} g = \Big( \frac{g}{\delta} \Big) \circ b^{-1}.
  \end{equation*}
  Therefore, if $\varphi \in \Mult(H^2_d \big|_V)$, then
  \begin{equation*}
    T M_\varphi T^{-1} g = (\varphi \circ b) \cdot g \quad (g \in \mathcal{H}),
  \end{equation*} 
  so the operator $T M_\varphi T^{-1}$ on $\mathcal{H}$ is given by multiplication with $\varphi \circ b$.
  In particular, $\varphi \circ b \in \Mult(\mathcal{H})$.
  Similarly, if $\psi \in \Mult(\mathcal{H})$, then $T^{-1} M_\psi T$
  is the operator of multiplication by $\psi \circ b^{-1}$ on $H^2_d \big|_V$,
  so $\psi \circ b^{-1} \in \Mult(H^2_d \big|_V)$. Hence (iii') holds.

  (iii) $\Leftrightarrow$ (iii') The complete Pick property of $H^2_d$ shows that
  the restriction map $\Mult(H^2_d) \to \Mult(H^2_d \big|_V)$ is surjective,
  and its kernel consists of all multipliers vanishing on $V$.
  The equivalence of (iii) and (iii') readily follows from this fact.
\end{proof}

In \cite[Section 7.5]{SS14}, Salomon and Shalit asked
if there exist $d \in \mathbb{N}$ and $V \subset \mathbb{B}_d$ such that $\Mult(\mathcal{D})$ is isomorphic to $\Mult(H^2_d \big|_V)$.
Using Theorem \ref{T:main}, we can also show that this cannot happen.

\begin{theorem}
  \label{thm:no_iso}
  There do not exist $d \in \mathbb{N}$ and a subset $V \subset \mathbb{B}_d$
  such that $\Mult(\mathcal{D})$ is algebraically isomorphic to $\Mult(H^2_d \big|_V)$.
\end{theorem}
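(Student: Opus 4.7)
The plan is to reduce Theorem \ref{thm:no_iso} to Theorem \ref{T:main}. Suppose, towards a contradiction, that there exist $d \in \mathbb{N}$ and $V \subseteq \mathbb{B}_d$ together with an algebra isomorphism $\Psi : \Mult(H^2_d \big|_V) \to \Mult(\mathcal{D})$. By the automatic continuity theorem for homomorphisms between commutative semi-simple Banach algebras (the same one used in the proof of Proposition \ref{prop:iso_bi_lipschitz}), $\Psi$ is bicontinuous. The restriction map $R: \Mult(H^2_d) \to \Mult(H^2_d \big|_V)$ is surjective by the complete Pick property of $H^2_d$ (the basic fact used in the equivalence (iii) $\Leftrightarrow$ (iii') of Proposition \ref{prop:other_emb}), so $\Phi := \Psi \circ R$ is a continuous surjective algebra homomorphism $\Mult(H^2_d) \to \Mult(\mathcal{D})$. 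My strategy is to show that $\Phi$ has the composition form $\varphi \mapsto \varphi \circ b$ for some $b: \mathbb{D} \to \mathbb{B}_d$, directly contradicting Theorem \ref{T:main}.

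The candidate map is $b: \mathbb{D} \to \mathbb{C}^d$, $b(z) := (\Phi(z_1)(z), \dots, \Phi(z_d)(z))$. Each $\Phi(z_i) \in \Mult(\mathcal{D})$ is a bounded holomorphic function on $\mathbb{D}$, so $b$ is holomorphic. Since the row $[M_{z_1}, \dots, M_{z_d}]$ is a contraction on $H^2_d$, every character $\chi$ of $\Mult(H^2_d)$ satisfies $\sum_i |\chi(z_i)|^2 \le 1$; applied to $\chi: \varphi \mapsto \Phi(\varphi)(z)$ this gives $\|b(z)\| \le 1$. To upgrade to the strict inclusion $b(\mathbb{D}) \subseteq \mathbb{B}_d$ I would argue as follows: if $\|b(z_0)\| = 1$ for some $z_0 \in \mathbb{D}$, then, since $\|b\|^2$ is subharmonic on $\mathbb{D}$ with supremum at most $1$, the maximum principle forces $\|b\|^2 \equiv 1$, and then $\partial \overline{\partial} \|b\|^2 = \sum_i |b_i'|^2 \equiv 0$, so $b$ is constant equal to some $\zeta \in \partial \mathbb{B}_d$. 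This means $\Psi(z_i|_V) = \zeta_i \cdot 1$ in $\Mult(\mathcal{D})$ for each $i$. Because $\Psi$ is a unital algebra isomorphism, it preserves scalars, and injectivity of $\Psi$ then forces $z_i|_V \equiv \zeta_i$ for all $i$, whence $V \subseteq \{\zeta\} \cap \mathbb{B}_d = \emptyset$, a contradiction.

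Once $b(\mathbb{D}) \subseteq \mathbb{B}_d$ is established, the character $\varphi \mapsto \Phi(\varphi)(z)$ of $\Mult(H^2_d)$ lies in the fiber over $b(z) \in \mathbb{B}_d$ of the map $\pi: \mathcal{M}(\Mult(H^2_d)) \to \overline{\mathbb{B}_d}$ recalled in the Introduction, and, since $d < \infty$, this fiber is a singleton consisting of the point evaluation at $b(z)$. Therefore $\Phi(\varphi)(z) = \varphi(b(z))$ for every $\varphi \in \Mult(H^2_d)$ and every $z \in \mathbb{D}$, so $\Phi$ is a surjective composition homomorphism induced by a map $b : \mathbb{D} \to \mathbb{B}_d$ with $d \in \mathbb{N}$. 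This directly contradicts Theorem \ref{T:main} and proves the claim. The main obstacle is the boundary-case analysis in the second paragraph: the row-contraction bound alone yields only $\|b(z)\| \le 1$, and one must combine the (pluri)subharmonic maximum principle with the injectivity of $\Psi$ (not merely its surjectivity, which is all that is available in the setting of Theorem \ref{T:main}) to rule out the degenerate scenario of $b$ being constant on the sphere.
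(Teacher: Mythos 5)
Your proposal is correct and follows essentially the same route as the paper: compose the hypothetical isomorphism with the restriction map $\Mult(H^2_d)\to\Mult(H^2_d\big|_V)$ (surjective by the complete Pick property), define $b$ via the images of the coordinate functions, use the maximum principle together with injectivity to force $b(\mathbb{D})\subseteq\mathbb{B}_d$, and then use that the fibers of $\pi$ over the open ball are singletons to recognize the composite as a composition homomorphism, contradicting Theorem \ref{T:main}. The only cosmetic difference is the order of the boundary argument: the paper first rules out constancy of $b$ via injectivity and then invokes the maximum modulus principle, whereas you run the maximum principle first and then use injectivity to exclude the constant case.
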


\begin{proof}
  We use arguments from the study of the isomorphism problem for multiplier
  algebras of complete Pick spaces to show that any isomorphism
  must be given by composition with a map $b: \mathbb{D} \to \mathbb{B}_d$;
  see for instance \cite[Theorem 2.4]{DHS15}.

  Suppose towards a contradiction that $V \subset \mathbb{B}_d$ for finite $d$
  and that $\Phi: \Mult(H^2_d \big|_V) \to \Mult(\mathcal{D})$
  is an algebraic isomorphism. Let
  \begin{equation*}
    \Psi: \Mult(H^2_d) \to \Mult(\mathcal{D}), \quad \varphi \mapsto \Phi( \varphi \big|_V).
  \end{equation*}
  The complete Pick property of $H^2_d$ and surjectivity of $\Phi$ imply that $\Psi$ is a
  surjective unital homomorphism.
  We consider the adjoint $\Psi^*$ between the maximal ideal spaces $\mathcal{M}(\Mult(\mathcal{D}))$
  and $\mathcal{M}(\Mult(H^2_d))$. For each $\lambda \in \mathbb{D}$, the character
  of point evaluation $\delta_\lambda$ belongs to $\mathcal{M}(\Mult(\mathcal{D}))$,
  and similarly for $\mathcal{M}(\Mult(H^2_d))$. Moreover, we have a map
  \begin{equation*}
    \pi: \mathcal{M}(\Mult(H^2_d)) \to \overline{\mathbb{B}_d}, \quad \chi \mapsto (\chi(z_1),\ldots,\chi(z_d)),
  \end{equation*}
  with the property that $\pi^{-1}(w) = \{ \delta_w \}$ for each $w \in \mathbb{B}_d$;
  see for instance Lemma 8.1 and Proposition 8.6 in \cite{Hartz17a}.

  Let
  \begin{equation*}
    b: \mathbb{D} \to \overline{\mathbb{B}_d}, \quad \lambda \mapsto (\pi \circ \Psi^*)(\delta_\lambda)
    = ( \Psi(z_1)(\lambda),\ldots, \Psi(z_d)(\lambda)).
  \end{equation*}
  Since $\Psi$ takes values in $\Mult(\mathcal{D})$, the map $b$ is holomorphic.
  If $b$ were constant, then since $\Psi(z_k) = \Phi(z_k \big|_V)$, injectivity
  of $\Phi$ would imply that $V$ is a singleton, a contradiction.
  Hence, $b$ is not constant and therefore takes values in the open ball $\mathbb{B}_d$
  by the maximum modulus principle.
  Since the fibers of $\pi$ over $\mathbb{B}_d$ are singletons, we see that
  $\Psi^*(\delta_\lambda) = \delta_{b(\lambda)}$ for all $\lambda \in \mathbb{D}$.
  Thus, for $\varphi \in \Mult(H^2_d)$ and $\lambda \in \mathbb{D}$, we find that
  \begin{equation*}
    \Psi(\varphi)(\lambda) = \Psi^*(\delta_\lambda)(\varphi) = \delta_{b(\lambda)}(\varphi) =
    \varphi(b(\lambda)),
  \end{equation*}
  so
  \begin{equation*}
    \Psi: \Mult(H^2_d) \to \Mult(\mathcal{D}), \quad \varphi \mapsto \varphi \circ b,
  \end{equation*}
  is a surjective homomorphism. But according to Theorem \ref{T:main}, this is impossible.
  This contradiction finishes the proof.
\end{proof}

\section{An explicit embedding for the Dirichlet space}
\label{sec:explicit_embedding}

Recall from the introduction that
\begin{equation*}
  k(z,w) = \frac{1}{z \overline{w}} \log \Big( \frac{1}{1 - z\overline{w}} \Big) =
  \sum_{n=0}^\infty (n+1)^n (z \overline{w})^n
\end{equation*}
denotes the reproducing kernel of the Dirichlet space and that the real numbers $(c_n)_{n=1}^\infty$
are defined by the power series identity
\begin{equation*}
  \sum_{n=1}^\infty c_n (z \overline{w})^n = 1 - \frac{1}{k(z,w)}.
\end{equation*}

An argument from the work of Kluyver \cite{Kluyver24} yields an explicit formula
for $c_n$. This is Proposition \ref{prop:explicit_intro}, which we restate for the reader's convenience.

\begin{proposition}
  \label{prop:explicit}
  For $n \ge 1$, we have
  \begin{equation*}
    c_n = \int_0^1 t \frac{\Gamma(n-t)}{\Gamma(n+1) \Gamma(1-t)} \, dt \ge 0.
  \end{equation*}
\end{proposition}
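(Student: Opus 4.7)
The plan is to start from the observation that integrating the exponential $e^{t\log(1-x)}$ in $t$ from $0$ to $1$ immediately inverts $k(z,w)$. Concretely, writing $x = z\overline{w}$, we have
\begin{equation*}
  \int_0^1 (1-x)^t \, dt = \int_0^1 e^{t\log(1-x)} \, dt = \frac{(1-x)-1}{\log(1-x)} = \frac{-x}{\log(1-x)} = \frac{1}{k(z,w)}.
\end{equation*}
This is the key identity underlying Kluyver's argument, and it reduces the problem to analyzing the Taylor expansion of $1 - (1-x)^t$ in $x$.

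Next I would substitute this representation into the defining identity for $c_n$, obtaining
\begin{equation*}
  \sum_{n=1}^\infty c_n x^n \;=\; 1 - \frac{1}{k(z,w)} \;=\; \int_0^1 \bigl(1 - (1-x)^t\bigr) \, dt.
\end{equation*}
Expanding the binomial series $(1-x)^t = \sum_{n=0}^\infty (-1)^n \binom{t}{n} x^n$, which converges uniformly on compact subsets of the disc, I would justify swapping sum and integral and read off
\begin{equation*}
  c_n \;=\; (-1)^{n+1} \int_0^1 \binom{t}{n} dt \;=\; \frac{(-1)^{n+1}}{n!} \int_0^1 t(t-1)(t-2)\cdots(t-n+1)\, dt.
\end{equation*}

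The remaining step is bookkeeping. For $t \in (0,1)$, the factor $t$ is positive while each of the $n-1$ factors $(t-k)$ with $1 \le k \le n-1$ is negative, so the product carries overall sign $(-1)^{n-1}$, and the extra $(-1)^{n+1}$ converts the integrand into the manifestly non-negative quantity $t(1-t)(2-t)\cdots(n-1-t)/n!$. Finally, I would invoke the functional equation $\Gamma(n-t) = (n-1-t)(n-2-t)\cdots(1-t)\,\Gamma(1-t)$ (with the convention that the empty product equals $1$ when $n=1$) to rewrite
\begin{equation*}
  \frac{(1-t)(2-t)\cdots(n-1-t)}{n!} = \frac{\Gamma(n-t)}{\Gamma(n+1)\,\Gamma(1-t)},
\end{equation*}
which yields the claimed formula and makes non-negativity transparent.

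There is no serious obstacle: the only point requiring any care is the sign accounting in the combinatorial coefficient $\binom{t}{n}$ for $t \in [0,1]$ and the verification that termwise integration is legitimate, both of which are routine. The conceptual content is entirely in the integral representation $1/k(z,w) = \int_0^1 (1-z\overline{w})^t\, dt$, after which the formula drops out of the binomial theorem.
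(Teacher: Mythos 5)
Your proof is correct and follows essentially the same route as the paper's: the identity $1/k = \int_0^1 (1-z\overline{w})^t\,dt$, termwise integration of the binomial series, and the sign/Gamma-function bookkeeping all match Kluyver's computation as reproduced in the paper. No issues.
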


\begin{proof}
  We reproduce the computation from \cite{Kluyver24}.
Let $z \in \mathbb{D}$ and notice that
\begin{equation*}
  \frac{d}{d t} \frac{(1-z)^t}{\log(1-z)} = (1-z)^t,
\end{equation*}
hence
\begin{equation*}
    \sum_{n=1}^\infty c_n z^n =
    1 + \frac{z}{\log(1-z)} = 1 - \int_0^1 (1-z)^t \, d t.
\end{equation*}
We can expand the integrand into a binomial series 
\begin{equation*}
  (1-z)^t = \sum_{n=0}^\infty \binom{t}{n} (-1)^n z^n,
\end{equation*}
which converges uniformly in $t \in [0,1]$ for fixed $z \in \mathbb{D}$
because the binomial coefficients are bounded in modulus by $1$. Therefore,
\begin{equation*}
  \sum_{n=1}^\infty c_n z^n =
    \sum_{n=1}^\infty z^n (-1)^{n+1} \int_0^1 \binom{t}{n} \, dt,
\end{equation*}
so comparing coefficients, we conclude that for all $n \ge 1$,
  \begin{align*}
    c_n = (-1)^{n+1} \int_0^1 \binom{t}{n} \, dt &= \frac{1}{n!}
    \int_0^1 t (1-t) (2 - t) \ldots (n-1 - t) \, dt \\
                                                 &=
    \int_0^1 t \frac{\Gamma(n-t)}{\Gamma(n+1) \Gamma(1-t)} \, dt.
  \end{align*}
  Clearly, the integrand is non-negative, so $c_n \ge 0$ for all $n \ge 1$.
\end{proof}

The coefficients $G_n = (-1)^{n-1} c_n$ appear in the literature under the name Gregory coefficients;
see \cite{Blagouchine16} for historical remarks and many results regarding these coefficients.
In particular, the asymptotic behavior of the Gregory coefficients, and hence of $(c_n)$, is well understood; see Equation 52 in \cite{Blagouchine16}.
In the study of the Dirichlet space, knowledge about the asymptotic behavior of $(c_n)$ is sometimes useful;
see for instance \cite[p.\ 126]{Trent04}.
Here, we sketch how the formula in Proposition \ref{prop:explicit} can be used to determine the first-order
behavior of $(c_n)$, which also follows from the finer analysis in \cite{Blagouchine16}.

\begin{corollary}
  The asymptotic relation
  \begin{equation*}
    c_n \sim \frac{1}{n \log(n)^2}
  \end{equation*}
  holds.
\end{corollary}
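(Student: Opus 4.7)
The plan is to extract the leading-order asymptotic of the integral
\[
  c_n = \int_0^1 t\, \frac{\Gamma(n-t)}{\Gamma(n+1)\,\Gamma(1-t)}\, dt
\]
from Proposition~\ref{prop:explicit} by a two-step procedure: first replace the ratio of Gamma functions by its power-law asymptotic, then apply a Laplace-type rescaling to isolate the dominant region of integration near $t = 0$.

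First, I would invoke the classical uniform asymptotic
\[
  \frac{\Gamma(n-t)}{\Gamma(n+1)} = n^{-t-1}\bigl(1 + O(1/n)\bigr), \qquad t \in [0,1],
\]
which follows from Stirling's formula with an $O$-constant independent of $t$. This reduces the problem to showing
\[
  \int_0^1 \frac{t\, n^{-t}}{\Gamma(1-t)}\, dt \sim \frac{1}{\log(n)^2}.
\]
To extract this, I would substitute $s = t \log n$, obtaining
\[
  \int_0^1 \frac{t\, n^{-t}}{\Gamma(1-t)}\, dt = \frac{1}{\log(n)^2} \int_0^{\log n} \frac{s\, e^{-s}}{\Gamma(1 - s/\log n)}\, ds.
\]

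The function $u \mapsto 1/\Gamma(1-u)$ is continuous on $[0,1]$ with value $1$ at $u = 0$ and value $0$ at $u = 1$, hence bounded by some constant $M$ on $[0,1]$. Consequently the integrand above is dominated by $M\, s e^{-s}$ (extending by $0$ past $\log n$), and since $s/\log n \to 0$ for each fixed $s$, dominated convergence gives
\[
  \int_0^{\log n} \frac{s\, e^{-s}}{\Gamma(1 - s/\log n)}\, ds \longrightarrow \int_0^\infty s\, e^{-s}\, ds = \Gamma(2) = 1,
\]
and combining with the first step yields $c_n \sim 1/(n \log(n)^2)$, as claimed.

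The main point requiring care is the uniformity of the Gamma-ratio estimate over the full interval $[0,1]$: because the factor $n^{-t}$ varies by the full range of a factor of $n$ as $t$ traverses $[0,1]$, one needs the relative error $O(1/n)$ in the ratio asymptotic to be genuinely uniform in $t$. This is straightforward from Binet's formula for $\log \Gamma$ (or from a careful application of Stirling on both $\Gamma(n-t)$ and $\Gamma(n+1)$), and the corresponding contribution of the error term to $c_n$ is $O(1/(n^2 \log(n)^2))$, negligible compared with the main term.
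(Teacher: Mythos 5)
Your proposal is correct, and its overall skeleton matches the paper's: both proofs start from the integral formula of Proposition~\ref{prop:explicit}, first replace $\Gamma(n-t)/\Gamma(n+1)$ by $n^{-t-1}$ uniformly in $t\in[0,1]$, and then show that the surviving integral is asymptotic to $\log(n)^{-2}$. The differences lie in the tools. For the Gamma-ratio step the paper uses Wendel's explicit two-sided inequality rather than Stirling; either works, and your remark about uniformity of the relative error is exactly the point that needs attention, so it is good that you flagged it. For the remaining integral the paper sandwiches $1/\Gamma(1-t)$ between $1-t$ and $1$ (via $s\Gamma(s)\le 1$ and $\Gamma(s)\ge 1$ on $(0,1]$) and evaluates both comparison integrals explicitly by integration by parts, checking that each is $\sim n^{-1}\log(n)^{-2}$. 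You instead perform the Laplace-type rescaling $s=t\log n$ and apply dominated convergence, using that $1/\Gamma$ is entire and hence $u\mapsto 1/\Gamma(1-u)$ is bounded on $[0,1]$; the limit $\int_0^\infty s e^{-s}\,ds=1$ then gives the constant directly. Your route is slightly slicker in that it isolates the asymptotic constant in one stroke and avoids evaluating two auxiliary integrals; the paper's route is more elementary in that it needs no convergence theorem, only explicit inequalities and antiderivatives. Both are complete and correct.
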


\begin{proof}
  To determine the asymptotic behavior of the integrand in Proposition \ref{prop:explicit} as $n \to \infty$,
  we use the following inequality of Wendel \cite{Wendel48}:
  \begin{equation}
    \label{eqn:Wendel}
    \Big( \frac{x}{x+s} \Big)^{1-s} \le \frac{\Gamma(x+s)}{x^s \Gamma(x)} \le 1 \quad \text{ for all } x \in \mathbb{R}, s \in [0,1]
  \end{equation}
  In combination with the functional equation $\Gamma(x+1) = x \Gamma(x)$, this formula shows that
\begin{equation*}
  \lim_{n \to \infty} \frac{\Gamma(n-t)}{\Gamma(n+1)} (n+1)^{t+1} = 1
\end{equation*}
uniformly in $t \in [0,1]$, so by Proposition \ref{prop:explicit}, we find that
\begin{equation}
  \label{eqn:c_n_2}
  c_n \sim \int_{0}^1 t (n+1)^{-t-1} \frac{1}{\Gamma(1-t)} \, dt.
\end{equation}
The second inequality in \eqref{eqn:Wendel}, applied with $x=1$, shows that $s \Gamma(s) \le 1$ for $s \in [0,1]$.
Together with the basic inequality $\Gamma(s) \ge 1$ for $s \in [0,1]$, we find that
\begin{equation*}
  \int_{0}^1 t (1-t) (n+1)^{-t-1} \, dt \le
  \int_{0}^1 t (n+1)^{-t-1} \frac{1}{\Gamma(1-t)} \, dt \le
  \int_{0}^1 t (n+1)^{-t-1} \, dt.
\end{equation*}
Both of these integrals can be computed explicitly, for instance using integration by parts,
which shows that both integrals have asymptotic behavior $\sim n^{-1} \log(n)^{-2}$.
By \eqref{eqn:c_n_2}, the sequence $(c_n)$ therefore has the same asymptotic behavior.
\end{proof}

\bibliographystyle{amsplain}
\bibliography{../../../Dropbox/Literature/jabref_database}

\end{document}